\newcommand{\BEA}{\begin{eqnarray}}
\newcommand{\EEA}{\end{eqnarray}}
\newtheorem{assumption}{Assumption}
\newtheorem{examp}{Example} %[section]
\newcommand{\comment}[1]{}
\def \bes{\begin{eqnarray}}
\def \ees{\end{eqnarray}}
\def \bns{\begin{eqnarray*}}
\def \ens{\end{eqnarray*}}
\newtheorem{theorem}{Theorem}[section]
\begin{document}

\title{An Equation-By-Equation Method for Solving the Multidimensional Moment Constrained Maximum Entropy Problem}
\author{Wenrui Hao}
\address{Department of Mathematics, the Pennsylvania State University, University Park, PA}
\email{wxh64@psu.edu}
\thanks{The research of W.H. was partially supported by the American Heart Association under Grant 17SDG33660722 and the Institute
for CyberScience Seed Grant.}

%    author two information
\author{John Harlim}
\address{Department of Mathematics, Department of Meteorology and Atmospheric Science, the Pennsylvania State University, University Park, PA}
\email{jharlim@psu.edu}
\thanks{The research of J.H. was partially supported by the Office of Naval Research Grants N00014-16-1-2888, MURI N00014-12-1-0912 and the National Science Foundation Grants DMS-1317919, DMS-1619661.}

\subjclass[2010]{65H10, 65H20, 94A17, 49M15}

\keywords{Homotopy continuation, moment constrained, maximum entropy, equation-by-equation method}

\date{\today}

\dedicatory{}

\begin{abstract}
An equation-by-equation (EBE) method is proposed to solve a system of nonlinear equations arising from the moment constrained maximum entropy problem of multidimensional variables. The design of the EBE method combines ideas from homotopy continuation and Newton's iterative methods. Theoretically, we establish the local convergence under appropriate conditions and show that the proposed method, geometrically, finds the solution by searching along the surface corresponding to one component of the nonlinear problem. We will demonstrate the robustness of the method on various numerical examples, including: (1) A six-moment one-dimensional entropy problem with an explicit solution that contains components of order $10^0-10^3$ in magnitude; (2) Four-moment multidimensional entropy problems with explicit solutions where the resulting systems to be solved ranging from $70-310$ equations; (3) Four- to eight-moment of a two-dimensional entropy problem, which solutions correspond to the densities of the two leading EOFs of the wind stress-driven large-scale oceanic model. In this case, we find that the EBE method is more accurate compared to the classical Newton's method, the MATLAB generic solver, and the previously developed BFGS-based method, which was also tested on this problem. (4) Four-moment constrained of up to five-dimensional entropy problems which solutions correspond to multidimensional densities of the components of the solutions of the Kuramoto-Sivashinsky equation. {\color{black}For the higher dimensional cases of this example, the EBE method is superior because it automatically selects a subset of the prescribed moment constraints from which the maximum entropy solution can be estimated within the desired tolerance.} This selection feature is particularly important since the moment constrained maximum entropy problems do not necessarily have solutions in general.
\end{abstract}

\maketitle

\section{Introduction}\label{section1}
The maximum entropy principle provides a natural criterion for estimating the least biased density function subjected to the given moments \cite{jaynes:57}. This density estimation approach has a wide range of applications, such as, the harmonic solid and quantum spin systems \cite{mp:84}, econometrics \cite{wu:03}, and geophysical applications \cite{amk:05,hma:05}. In a nutshell, this moment constrained method is a parametric estimation technique where the resulting density function is in the form of an exponential of polynomials. This is a consequence of maximizing the Shannon entropy subjected to the polynomial moment constraints, which is usually transformed into an unconstrained minimization problem of a Lagrangian function \cite{wptz:01}. Standard approaches for solving this unconstrained minimization problem are based on Newton's iterative method \cite{abramov2007,wptz:01} or quasi-Newton's based method such as the BFGS method \cite{abramov2009,abramov2010}.

In the last two papers \cite{abramov2009,abramov2010}, where the BFGS-based method was introduced and reviewed, they considered minimization problems that involve 44-83 equations, resulting from a 2D problem with moment constraints of up to order-eight, a 3D problem with moment constraints of up to order-six, and a 4D problem with moment constraints of up to order-four. In this paper, we introduce a novel equation solver that can be used to find density function of moderately high dimensional problems (e.g., systems of 70-310 equations resulting from moments up to order-four of 4-7 dimensional density functions) provided that the solutions exist. The proposed method, which we called the Equation-By-Equation (EBE) method, is an iterative method that solves a one-dimensional problem at the first iterate, a two-dimensional problem at the second iterate, a three-dimensional problem at the third iterate, and eventually, solves the full system of nonlinear equations corresponding to the maximum entropy problem at the last iterate. Technically, this method combines Newton's method with ideas from homotopy continuation. We will show that the EBE method is locally convergent under appropriate conditions. Furthermore, we will provide sufficient conditions for global convergence. Through the convergence analysis, we will show that, geometrically, the proposed method finds the solution of the nonlinear system of equations by tracking along the surface corresponding to one component of the system of nonlinear equations. The EBE method automatically selects a subset of the prescribed constraints from which the maximum entropy solution can be estimated within the desired tolerance. This is an important feature since the maximum entropy problems do not necessarily have solutions for general set of moment constraints.

We shall find that the EBE method produces more accurate solutions (smaller error in the moments) compared to the classical Newton's method, the MATLAB built-in {\asciifamily fsolve.m}, and BFGS method on the test problem in \cite{abramov2009,abramov2010} and on test problems based on the solutions of the Kuramoto-Shivashinski equation. Numerically, we will demonstrate that the EBE method is able to solve problems where the true solutions consist of components of order $10^0-10^3$. We shall also see that the EBE method can solve a system of hundreds of equations in various examples, including those with explicit solutions as well as those with densities estimated based on solutions of complex spatially extended dynamical systems.

The remaining part of the paper is organized as follows. In Section~\ref{section2}, we give a brief overview of the multidimensional maximum entropy problem. In Section~\ref{section3}, we introduce the EBE algorithm. In Section~\ref{section4}, we provide the local convergence analysis. In Section~\ref{section5}, we discuss the practical issues with the proposed method and provide remedies. In Section~\ref{section6}, we demonstrate the robustness of the EBE method on various numerical examples. In Section~\ref{section7}, we conclude the paper with a brief summary and discussion. We include an Appendix to show some computational details that are left out in the main text. {\color{black}Interested readers and users can access the EBE codes (written in MATLAB) at \cite{ebecodes}.}

\section{An overview of the maximum entropy problem}\label{section2}

We consider the Haussdorf moment-constrained maximum entropy problem \cite{abramov2007,abramov2010,ft1994}. That is, find the optimal probability
density $\rho^*({\bf x})$ which maximizes the Shannon entropy,
\bes S(\rho):=-\int_{\Omega}\log(\rho(\mathbf{x}))\rho({\bf x}) d\mathbf{x},\label{Max}\ees
where $\mathbf{x}\in\Omega=[-1,1]^d$ satisfies the following linear constraints,
\bes\mathcal{F}_{\bf j}:=\int_{\Omega}c_{\bf j}(\mathbf{x})\rho(\mathbf{x})d\mathbf{x}=f_{\bf j}, \quad |{\bf j}|=0,1,2,\cdots,p.\label{Cons}\ees
In applications, one usually computes the statistics $f_{\bf j}$ from samples of data. For arbitrary finite domain, one can rescale the data to the domain $\Omega$.

While $c_{\bf j}(\mathbf{x})$ can be arbitrary functions in $L^1(\Omega,\rho)$, we will focus on the usual uncentered statistical moments with monomial basis functions, $c_{\bf j}(\mathbf{x})={\bf x}^{\bf j}$ in this article, where we have adopted the notations ${\bf x}=(x_1,\ldots,x_d)\in\Omega$, ${\bf j}=(j_1,\ldots,j_d)\in\mathbb{Z}_+^d$ with $\mathbb{Z}_+ = \{0,1,2,\ldots\}$, and ${\bf x}^{\bf j} = \prod_{i=1}^d x_i^{j_i}$. In \eqref{Cons}, the quantities $f_{\bf j}$ are the given ${\bf j}$-th moments that can be computed from the data. Since the total number of monomials ${\bf x}^{\bf j}$ where $|{\bf j}| = j$ is $C^{j+d-1}_{d-1}$, then the total number of constraints in \eqref{Cons} for moments up to order-$p$ is,
\bes
n = \sum_{j=1}^p C^{j+d-1}_{d-1},\nonumber
\ees
excluding the normalization factor corresponding to $c_{\bf 0}({\bf x})=1$. For example, in two-dimensional problem, the total number of moments up to order $p=4$ is $n=14$. To simplify the notation below, we will use a single index notation and understood that the total number of constraints to be satisfied is $n$, excluding the zeroth moment. The exclusion of the zeroth moment will be clear as we discuss below.

By introducing Lagrange multipliers, the above constrained optimization problem can be transformed into the following unconstrained problem:
\bes \mathcal{L}(\rho(\mathbf{x}),\lambda_0,\cdots\lambda_n)=S(\rho)+\sum_{j=0}^n\lambda_j(\mathcal{F}_j-f_j).\label{Lag}\ees
In order to find a solution of  \eqref{Lag}, we set $\frac{\partial \mathcal{L}}{\partial \rho}=0$, which gives,
\bes
 \rho(\mathbf{x})=\frac{1}{Z}\exp\Big({\sum_{j=1}^n\lambda_jc_j(\mathbf{x})\Big)},\label{expoly}
\ees
where we have defined $Z = \exp (1-\lambda_0)$.
Since $\int_{\Omega} \rho(\mathbf{x})d\mathbf{x}=1$, we have
\bes Z(\lambda_1,\ldots,\lambda_n) =\int_{\Omega} \exp \Big({\sum_{j=1}^n\lambda_jc_j(\mathbf{x})}\Big)\,d\mathbf{x},\label{norm}\ees
which indicates that $Z$ (or implicitly $\lambda_0$) is a function of $\lambda_1,\ldots, \lambda_n$. Therefore, the normalization factor $Z$ can be computed via \eqref{norm} once $\lambda_1,\ldots, \lambda_n$ are estimated. Therefore, we can just concentrate on finding the Lagrange multipliers $\lambda_1,\ldots, \lambda_n$ which satisfy $n$ constraints in \eqref{Cons}, excluding the case $c_{\bf{0}}({\bf x})=1$. In particular, the constrained maximum entropy problem is to solve the following nonlinear system of integral equations,
\bes F_j(\lambda_1,\cdots,\lambda_n)&:=& \mathcal{F}_{j}(\lambda_1,\ldots,\lambda_n)-f_j \nonumber \\ &=&  \int_{\Omega} (c_j(\mathbf{x})-f_j)\exp\Big({\sum_{k=1}^n\lambda_kc_k(\mathbf{x})}\Big)\,d\mathbf{x}=0,\quad j=1,\ldots, n,\label{Poly}\ees
for $\lambda_1,\ldots,\lambda_n$.

%Since $c_j(\mathbf{x})$ may involve high degree polynomials, we consider the integration domain $\Omega :=[-1,1]^n$ in order to make our numerical computation to be more stable. In applications, since one usually computes the statistics $f_j$ from samples of data, then one can rescale the data to domain $\Omega=[-1,1]^n$ so all the integral below is with respect to a linear rescaling to domain $\Omega$. Effectively, we are considering the Haussdorf moment problem \cite{ft1994,abramov2007} where the probability density is supported on an $n-dimensional$ cube (or hypercube) which can be rescaled to $\Omega$.
%
In our numerical implementation, the integral in system (\ref{Poly}) will be approximated with a nested sparse grid quadrature rule \cite{GG},
\[\int_\Omega f(\mathbf{x})d\mathbf{x} \approx \sum_i f(\mathbf{x}_i) w_i,\]
where $\mathbf{x}_i$ are the nested sparse grid nodes, and $w_i$ are the corresponding weights based on the nested Clenshaw-Curtis quadrature rule \cite{trefethen2008}. The number of nodes depends on the dimension of the problem $d$ and the number of the nested set (based on the Smolyak construction \cite{smolyak1963}), is denoted with the parameter $\ell$ (referred as level). In the numerical implementation, we need to specify the parameter $\ell$.

\section{An equation-by-equation algorithm}\label{section3}
In this section, we describe the new Equation-By-Equation (EBE) technique to solve the system of equations in \eqref{Poly},
\bes
{\bf F}_{n}(\bm{\lambda}_n)={\bf 0},\label{Fn}
\ees
where we have defined,
\bes
\mathbf{F}_{n}(\bm{\lambda}_n):=\Big(F_1(\bm{\lambda}_n),\ldots, F_{n}(\bm{\lambda}_n)\Big),\nonumber
\nonumber\ees
and $\bm{\lambda}_n = (\lambda_i,\ldots,\lambda_n)$. In the following iterative scheme, we start the iteration with an initial condition $(\alpha_1,\ldots,\alpha_n)\in\mathbb{R}^n$. We define $\bm{\mu}^{(i)}\in\mathbb{R}^i$ as the exact solution to the following $i$-dimensional system,
\bes
\mathbf{F}_{i}(\bm{\lambda}_i,\alpha_{i+1},\ldots,\alpha_n) = {\bf 0}, \quad i=1,\ldots, n,\label{Fi}
\ees
where we have fixed the last $n-i$ coefficients, $\lambda_{i+1}=\alpha_{i+1}$, \ldots,  $\lambda_{n}=\alpha_{n}$. With this notation, the exact solution for \eqref{Fn} is $\bm{\mu}^{(n)}\in\mathbb{R}^n$. We also define $\bm{\hat{\mu}}^{(i)}$ to be the numerical estimate of $\bm{\mu}^{(i)}$. With these notations, we now describe the algorithm.

Generally speaking, at each iteration-$i$, where $i=1,\ldots,n$, the EBE algorithm solves a system of $i$-dimensional system in \eqref{Fi}. At each step-$i$, given the numerical solution at the previous step, $\bm{\hat{\mu}}^{(i-1)} \in\mathbb{R}^{i-1}$ and initial condition $\alpha_i$, we apply idea from homotopy continuation to find the solution $\bm{\mu}^{(i)}\in\mathbb{R}^i$ that solves the $i$-dimensional system of equations \eqref{Fi}. Notice that we do not only add a new equation $F_{i}(\bm{\lambda}_{i},\alpha_{i+1},\ldots,\alpha_n)=0$ but we also estimate the $i$th variable in the previous $(i-1)$ equations,
$\mathbf{F}_{i-1}(\bm{\lambda}_{i},\alpha_{i+1},\ldots,\alpha_{n})={\bf 0}$. The scheme proceeds by solving the larger systems one-by-one until $i=n$ so we eventually solve \eqref{Fn}.

Now let us describe how to numerically estimate $\bm{\mu}^{(i)}$ at every step-$i$. For the first step $i=1$, we solve the one-dimensional problem,
\bes
{\bf F}_1(\bm{\lambda}_1,\alpha_2,\ldots,\alpha_n) = 0, \nonumber
\ees
for $\bm{\lambda}_1$ with Newton's method. For the steps $i = 2,\ldots,n$, we have $\bm{\hat{\mu}}^{(i-1)}$ which are the numerical estimates of ${\bf F}_{i-1}(\bm{\lambda}_{i-1},\alpha_{i},\ldots,\alpha_n)={\bf 0}$. To simplify the expression below, let us use $F_{i}(\bm{\lambda}_{i-1},\lambda_{i})$ as a short hand notation for $F_{i}(\bm{\lambda}_{i-1},\lambda_{i},\alpha_i,\ldots,\alpha_n)$ to emphasize the independent variables.

We proceed to estimate $\lambda_{i}$ using Newton's method with $Tol_1$ on the $i$-th equation. That is, we iterate
\bes
\lambda_{i}^{m+1}&=&\lambda_{i}^{m}-\Big(\frac{\partial F_{i}}{\partial \lambda_{i}}(\bm{\lambda}_{i-1}^{m},\lambda_{i}^{m})\Big)^{-1} F_{i}(\bm{\lambda}_{i-1}^{m},\lambda_{i}^{m}) , \quad m=0,1\ldots,\label{scalarnewton}\\
 \lambda_i^0 &=& \alpha_i, \quad \bm{\lambda}_{i-1}^0 = \bm{\hat{\mu}}^{(i-1)} \nonumber
\ees
assuming that $\frac{\partial F_{i}}{\partial \lambda_{i}}(\bm{\lambda}_{i-1}^{m},\lambda_{i}^{m})\neq 0$. Here, the partial derivative of $F_{i}$ with respect to $\lambda_{i}$ evaluated at $\lambda_{i}^{m}$  is defined as,
\bes
\frac{\partial F_{i}}{\partial \lambda_{i}}(\bm{\lambda}_{i-1}^{m},\lambda_{i}^{m}) =\int_{\Omega} (c_{i}(\mathbf{x})-f_{i})c_{i}(\mathbf{x})\exp\Big({\sum_{j=1}^{i-1}\lambda_{j}^{m} c_j(\mathbf{x})+ \lambda_{i}^{m}c_{i}(\mathbf{x})}\Big)\,d\mathbf{x},
\ees
where we have denoted $\bm{\lambda}_{i-1}^m = (\lambda_{i}^{m},\ldots,\lambda_{i-1}^{m})$. Notice that to proceed the iteration in \eqref{scalarnewton}, we need to update $\bm{\lambda}_{i-1}^{m}$ for $m>0$. We propose to follow the homotopy continuation method for this update. In particular, we are looking for $\bm{\lambda}_{i-1}^{m+1}$ that solves $\mathbf{F}_{i-1}(\bm{\lambda}_{i-1}^{m+1},\lambda_{i}^{m+1})={\bf 0}$, given the current estimate $\lambda_{i}^{m+1}$ from \eqref{scalarnewton} as well as $\mathbf{F}_{i-1}(\bm{\lambda}_{i-1}^{m},\lambda_{i}^{m})={\bf 0}$. At $m=0$, this last constraint is numerically estimated by $\mathbf{F}_{i-1}(\bm{\hat{\mu}}^{(i-1)},\alpha_{i})\approx{\bf 0}$.

One way to solve this problem is through the following predictor-corrector step which is usually used in homotopy continuation method \cite{BHSW,SW}. In particular, we apply Taylor's expansion to
\bes
\mathbf{F}_{i-1}(\bm{\lambda}_{i-1}^{m+1},\lambda_{i}^{m+1}) = \mathbf{F}_{i-1}(\bm{\lambda}_{i-1}^{m}+\Delta \bm{\lambda},\lambda_{i}^{m} + (\lambda_{i}^{m+1}-\lambda_{i}^{m}))={\bf 0}\nonumber
\ees
at $(\bm{\lambda}_{i-1}^{m},\lambda_{i}^{m})$, which gives,
\bes
\mathbf{F}_{i-1}(\bm{\lambda}_{i-1}^{m},\lambda_{i}^{m}) + {\bf F}_{i-1,\bm{\lambda}_{i-1}}(\bm{\lambda}_{i-1}^{m},\lambda_{i}^{m}) \Delta \bm{\lambda} + {\bf F}_{i-1,\lambda_{i}}(\bm{\lambda}_{i-1}^{m},\lambda_{i}^{m}) (\lambda_{i}^{m+1}-\lambda_{i}^{m}) ={\bf 0},\nonumber
\ees
which means that
\bes
\Delta \bm{\lambda} = -{\bf F}^{-1}_{i-1,\bm{\lambda}_{i-1}}(\bm{\lambda}_{i-1}^{m},\lambda_{i}^{m}) {\bf F}_{i-1,\lambda_{i}}(\bm{\lambda}_{i-1}^{m},\lambda_{i}^{m})(\lambda_{i}^{m+1}-\lambda_{i}^{m}),\nonumber
\ees
assuming that ${\bf F}_{i-1,\bm{\lambda}_{i-1}}(\bm{\lambda}_{i-1}^{m},\lambda_{i}^{m})$ is invertible. Based on this linear prediction, $\bm{\lambda}_{i-1}^{m+1}$ is approximated by,
\bes
\bm{\tilde\lambda}_{i-1}^{m+1} &=& \bm{\lambda}_{i-1}^{m}+\Delta \bm{\lambda} \nonumber \\ &=&   \bm{\lambda}_{i-1}^{m}-{\bf F}^{-1}_{i-1,\bm{\lambda}_{i-1}}(\bm{\lambda}_{i-1}^{m},\lambda_{i}^{m}) {\bf F}_{i-1,\lambda_{i}}(\bm{\lambda}_{i-1}^{m},\lambda_{i}^{m})(\lambda_{i}^{m+1}-\lambda_{i}^{m}) .\label{predictor}
\ees
Subsequently, when $\|\mathbf{F}_i(\bm{\tilde\lambda}_{i-1}^{m+1},\lambda_{i}^{m+1})\|\geq Tol_2$ , apply a correction using Newton's method by expanding,
\bes
{\bf 0} = \mathbf{F}_{i-1}(\bm{\lambda}_{i-1}^{m+1},\lambda_{i}^{m+1}) =\mathbf{F}_{i-1}(\bm{\tilde\lambda}_{i-1}^{m+1},\lambda_{i}^{m+1}) +  \mathbf{F}_{i-1,\bm{\lambda}_{i-1}}(\bm{\tilde\lambda}_{i-1}^{m+1},\lambda_{i}^{m+1})\Delta \bm{\tilde\lambda},\nonumber
\ees
assuming that $\bm{\lambda}_{i-1}^{m+1}= \bm{\tilde\lambda}_{i-1}^{m+1}+\Delta \bm{\tilde\lambda}$,
to find that,
\bes
\bm{\lambda}_{i-1}^{m+1}= \bm{\tilde\lambda}_{i-1}^{m+1}  - \mathbf{F}_{i-1,\bm{\lambda}_{i-1}}(\bm{\tilde\lambda}_{i-1}^{m+1},\lambda_{i}^{m+1})^{-1} \mathbf{F}_{i-1}(\bm{\tilde\lambda}_{i-1}^{m+1},\lambda_{i}^{m+1}).\label{corrector}
\ees
This expression assumes that $\mathbf{F}_{i-1,\bm{\lambda}_{i-1}}(\bm{\tilde\lambda}_{i-1}^{m+1},\lambda_{i}^{m+1})$ is invertible.

In summary, at each step-$i$, we iterate \eqref{scalarnewton}, \eqref{predictor}, \eqref{corrector}. So, the outer loop $i$ corresponds to adding one equation to the system at the time and for each $i$, we apply an inner loop, indexed with $m$, to find the solution $\bm{\mu}^{(i)}$ for ${\bf F}_i(\bf{\lambda}_i,\alpha_{i+1},\ldots,\alpha_n)={\bf 0}$. We denote the approximate solution as $\bm{\hat\mu}^{(i)}$. An adaptive tolerance technique is employed to compute the initial guess of ${\bf F}_i$  by using Newton's method. In particular, when the current tolerance $Tol_2$ is not satisfied after executing \eqref{corrector}, then we divide $Tol_1$ by ten until $Tol_2$ is met.

\comment{
The pseudocode is summarized in Algorithm~\ref{alg1}.
\begin{algorithm}[H]
\small\SetAlgoLined
 \SetAlgorithmName{Algorithm}{problem}{List of problems}
\SetKwInOut{Input}{Input}\SetKwInOut{Output}{Output} %\LinesNumbered
\Input{Degree index for $c_j(\mathbf{x})$, moments $f_j$, $j=1,\cdots,n$, tolerance for homotopy $Tol_1$ and tolerance for Newton's method $Tol_2$} \Output{A solution $\Lambda_n$}
Set ${\bm{\alpha}}=0$;\\ \For{$i=1,\cdots,n$}{
Set $Tol=Tol_1$;\\
\Do{Newton's method succeeds}{
Solve
$\mathbf{F}_i$ by using Newton's method in \eqref{predictor} with $\hat{\mathbf{\Lambda}}_i$
as the initial guess, upload $\mathbf{\Lambda}_i$;\\
\If{$\| \mathbf{F}_i(\bm{\tilde\lambda}_{i-1}^{m+1},\lambda_{i}^{m+1})\|\geq Tol_2$}{
$Tol=Tol/10$;\\
Update $\hat{\mathbf{\Lambda}}_i$  with $\|F_{i}(\lambda_{i}^{m+1})\|<Tol$;}
}
\If{i=n}{break;} Set $\lambda_{i+1}^0=0$ and $m=1$;
\\\While{$\|F_{i+1}(\lambda_{i+1}^m)\|<Tol_1$}{$\lambda_{i+1}^{m+1}=\lambda_{i+1}^m-F_{i+1}(\lambda_{i+1}^m)/dF_{i+1}(\lambda_{i+1}^m)$\\Adaptively homotopy $\mathbf{F}_i$ from $\lambda_{i+1}^m$ to $\lambda_{i+1}^{m+1}$ and obtain the solution $\mathbf{\Lambda}_i^{m+1}$;\\m=m+1;}
Update $\hat{\mathbf{\Lambda}}_i=[\mathbf{\Lambda}_i;\lambda_{i+1}^m]$;}
 \caption{Summary of the equation-by-equation algorithm}\label{alg1}
\end{algorithm}}

Recall that the standard Newton's method assumes that the Jacobian ${\bf F}_{n,\bm{\lambda}_n}\in\mathbb{R}^{n\times n}$ is nonsingular at the root of the full system in \eqref{Poly} to guarantee the local convergence. %This implies that to guarantee convergence, we need to specify an initial condition that is close to the solution of the full system in \eqref{Poly}.
In the next section, we will show that the  EBE method requires the following conditions for local convergence:
\begin{assumption}
Let $\bm{\mu}^{(i)}\in\mathbb{R}^i$ be a solution of ${\bf F}_{i}(\bm{\lambda}_{i},\alpha_{i+1},\ldots,\alpha_n)={\bf 0}$, for each $i=1,\ldots,n$. The EBE method assumes the following conditions:
\begin{enumerate}
\item $\frac{\partial F_{i}}{\partial \lambda_{i}}(\bm{\mu}^{(i)},\alpha_{i+1},\ldots,\alpha_n)\neq 0$.
\item $\mathbf{F}_{i,\bm{\lambda}_{i}}(\bm{\mu}^{(i)},\alpha_{i+1},\ldots,\alpha_n)$ are nonsingular.
%\item There exists a nonempty connected set that contains $(\lambda^{i,*}_1,\ldots,\lambda^{i,*}_{i},0)$ and $(\lambda^{i+1,*}_1,\ldots,\lambda^{i+1,*}_{i+1})$ such that $\mathbf{F}_{i,\bm{\lambda}_{i}}$ evaluated at any point in this set is nonsingular.
\item Each component of ${\bf F_{i}}$ is twice differentiable in a close region whose interior contains the solution $\bm{\mu}^{(i)}$.
%\item The surface $F_1(\lambda_1,\ldots,\lambda_n)=0$ is connected.
\end{enumerate}
\end{assumption}

These conditions are similar to the standard Newton's assumptions on each system of $i$ equations. The smoothness condition will be used in the proof of the local convergence in the next section. Of course if one can specify initial conditions that are sufficiently close to the true solution, then one can simply apply Newton's method directly. With the EBE method, we can start with any arbitrary initial condition. Theoretically, this will require an additional condition beyond the Assumption~1 for global convergence as we shall discuss in Section~\ref{section4}. In Section~\ref{section5}, we will provide several remedies when the initial condition is not close to the solution. In fact, we will always set the initial condition to zero in our numerical implementation in Section~\ref{section6}, $\alpha_i=0, \forall i=1,\ldots, n$, and demonstrate that the EBE method is numerically accurate in the test problems with solutions that are far away from zero.

\section{Convergence analysis}\label{section4}

In this section, we study the convergence of this method. First, let's concentrate on the convergence of the iteration \eqref{scalarnewton}, \eqref{predictor}, \eqref{corrector} for solving the $i$-dimensional system, ${\bf F}_i(\bm{\lambda}_{i-1},\lambda_i,\alpha_{i+1},\ldots,\alpha_n):= {\bf F}_i(\bm{\lambda}_{i-1},\lambda_i)={\bf 0}$ for $\bm{\lambda}_{i-1}$ and $\lambda_i$.
In compact form, these three steps can be written as an iterative map,
\bes
(\bm{\lambda}_{i-1}^{m+1},\lambda_{i+1}^{m+1}) = {\bf H}_{i} (\bm{\lambda}_{i-1}^{m},\lambda_{i}^{m}),\label{iterateH}
\ees
where the map ${\bf H}_{i}:\mathbb{R}^{i}\to \mathbb{R}^{i}$ is defined as,
\bes
{\bf H}_{i}(\bm{\lambda}_{i-1}, \lambda_{i}) := \begin{pmatrix}  \bm{g}_{i}  - \mathbf{F}_{i-1,\bm{\lambda}_{i-1}}(\bm{g}_{i},H_{i,2})^{-1} \mathbf{F}_{i-1}(\bm{g}_{i},H_{i,2}) \\
\lambda_{i}-(\frac{\partial F_{i}}{\partial \lambda_{i}}(\bm{\lambda}_{i-1},\lambda_{i}))^{-1} F_{i}(\bm{\lambda}_{i-1},\lambda_{i}) \end{pmatrix}.\label{mapH}
\ees
In \eqref{mapH}, the notation $H_{i,2}$ denotes the second component of \eqref{mapH} and
\bes
\bm{g}_{i} :=  \bm{\lambda}_{i-1} -{\bf F}_{i-1,\bm{\lambda}_{i-1}}(\bm{\lambda}_{i-1},\lambda_i)^{-1} {\bf F}_{i-1,\lambda_{i}}(\bm{\lambda}_{i-1},\lambda_i)(H_{i,2}-\lambda_{i}) \label{functiong}
\ees
is defined exactly as in \eqref{predictor}.

For notational convenience in the discussion below, we let the components of the exact solution of \eqref{Fi} be defined as $\bm{\mu}^{(i)}:=(\bm{\mu}_{i-1}^{(i)}, \mu_i^{(i)})\in\mathbb{R}^{i}$. Here, we denote the first $i-1$ components as $\bm{\mu}_{i-1}^{(i)}=(\mu_1^{(i)},\ldots,\mu_{i-1}^{(i)})\in\mathbb{R}^{i-1}$. Similarly, we also denote ${\bf H}_{i} = ({\bf H}_{i,1},{H}_{i,2})$.
First, we can deduce that,

\begin{theorem}
Let $\bm{\mu}^{(i)} \in\mathbb{R}^{i}$ be a fixed point of \eqref{iterateH}. Assume that ${\bf F}^*_{i-1,\bm{\lambda}_{i-1}}:={\bf F}_{i-1,\bm{\lambda}_{i-1}}(\bm{\mu}^{(i)})$ is nonsingular and $\frac{\partial F^*_{i}}{\partial \lambda_{i}}:= \frac{\partial F_{i}}{\partial_{\lambda_{i}}} (\bm{\mu}^{(i)})\neq 0$, then ${\bf F}^*_{i} :={\bf F}_{i}(\bm{\mu}^{(i)})={\bf 0}$.
\end{theorem}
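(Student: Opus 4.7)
The plan is to unfold the fixed-point equation ${\bf H}_i(\bm{\mu}^{(i)}) = \bm{\mu}^{(i)}$ component by component and use the two nondegeneracy assumptions to kill the residual terms. I would first dispose of the scalar (second) component, which is almost immediate, and then go back and handle the vector (first) component, which requires one extra step because of the nested appearance of $\bm{g}_i$ and $H_{i,2}$ inside it.

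First I would look at the second component. The fixed-point condition gives
\[
\mu_i^{(i)} \;=\; \mu_i^{(i)} \;-\; \Bigl(\tfrac{\partial F_i}{\partial \lambda_i}(\bm{\mu}^{(i)})\Bigr)^{-1} F_i(\bm{\mu}^{(i)}),
\]
so $\bigl(\tfrac{\partial F_i}{\partial \lambda_i}(\bm{\mu}^{(i)})\bigr)^{-1} F_i(\bm{\mu}^{(i)}) = 0$. Multiplying through by the nonzero scalar $\tfrac{\partial F^*_i}{\partial \lambda_i}$ yields $F_i(\bm{\mu}^{(i)}) = 0$. This also tells us that $H_{i,2}(\bm{\mu}^{(i)}) = \mu_i^{(i)}$, which is the key fact I will use in the next step.

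Next I would tackle the first component. Evaluating the definition \eqref{functiong} of $\bm{g}_i$ at the fixed point and using $H_{i,2}(\bm{\mu}^{(i)}) = \mu_i^{(i)}$ from the previous step, the correction term $(H_{i,2} - \lambda_i)$ vanishes, so $\bm{g}_i(\bm{\mu}^{(i)}) = \bm{\mu}_{i-1}^{(i)}$. Substituting this back into the first row of \eqref{mapH}, the fixed-point condition on the first component reduces to
\[
\bm{\mu}_{i-1}^{(i)} \;=\; \bm{\mu}_{i-1}^{(i)} \;-\; {\bf F}_{i-1,\bm{\lambda}_{i-1}}(\bm{\mu}^{(i)})^{-1}\,{\bf F}_{i-1}(\bm{\mu}^{(i)}).
\]
Since ${\bf F}^*_{i-1,\bm{\lambda}_{i-1}}$ is nonsingular by assumption, multiplying by it on the left gives ${\bf F}_{i-1}(\bm{\mu}^{(i)}) = {\bf 0}$. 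Combining this with $F_i(\bm{\mu}^{(i)}) = 0$ from the first step yields ${\bf F}_i(\bm{\mu}^{(i)}) = {\bf 0}$, as claimed.

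The argument is essentially bookkeeping; the only subtle point I want to be careful about is the order of deductions. Specifically, I need to establish $H_{i,2}(\bm{\mu}^{(i)}) = \mu_i^{(i)}$ \emph{before} simplifying $\bm{g}_i$, because $\bm{g}_i$ depends on $H_{i,2}$ through the definition in \eqref{functiong}. Once that ordering is fixed, the nondegeneracy assumptions enter only to cancel the two inverses, and no smoothness or local analysis is required — this is purely a consistency statement about fixed points of the iteration map.
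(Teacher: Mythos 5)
Your proof is correct and follows essentially the same route as the paper's: first read off $F_i(\bm{\mu}^{(i)})=0$ and $H_{i,2}(\bm{\mu}^{(i)})=\mu_i^{(i)}$ from the second component, deduce $\bm{g}_i(\bm{\mu}^{(i)})=\bm{\mu}_{i-1}^{(i)}$, and then use nonsingularity of ${\bf F}^*_{i-1,\bm{\lambda}_{i-1}}$ in the first component to conclude ${\bf F}_{i-1}(\bm{\mu}^{(i)})={\bf 0}$. Your remark about establishing $H_{i,2}(\bm{\mu}^{(i)})=\mu_i^{(i)}$ before simplifying $\bm{g}_i$ is exactly the ordering used (implicitly) in the paper's argument.
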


\begin{proof}
Evaluating the second equation in \eqref{mapH} at the fixed point, we obtain
\bes
\mu_i^{(i)} = \mu_i^{(i)}-\Big(\frac{\partial F^*_{i}}{\partial \lambda_{i}}\Big)^{-1} F^*_{i},\nonumber
\ees
which means that $F^*_{i}:=F_{i}(\bm{\mu}^{(i)})=0$. This also implies that $H^*_{i,2}=\mu_{i}^{(i)}$, where $H^*_{i,2}$ denotes the second component of \eqref{mapH} evaluated at the fixed point. Subsequently,
\bes
\bm{g}^*_{i} := \bm{g}_{i}(\bm{\mu}_{i-1}^{(i)},\mu_i^{(i)}) =  \bm{\mu}_{i-1}^{(i)}.\nonumber
\ees
Substituting $H^*_{i,2}=\mu_i^{(i)}$ and $\bm{g}^*_{i}=\bm{\mu}_{i-1}^{(i)}$ into $\bm{\mu}_{i-1}^{(i)}=\bm{H}_{i,1}^*$, where $\bm{H}^*_{i,1}$ denotes the first equation in \eqref{mapH} evaluated at the fixed point $\bm{\mu}^{(i)}$, we immediately obtain $\mathbf{F}^*_{i-1}:=\mathbf{F}_{i-1}(\bm{\mu}^{(i)}) = {\bf 0}$ and the proof is completed.
\end{proof}

This theorem says that the fixed points of \eqref{iterateH} are indeed the solutions of \[{\bf F}_i(\bm{\lambda}_{i-1},\lambda_i,\alpha_{i+1},\ldots,\alpha_n)={\bf 0},\] which is what we intend to solve on each iteration $i=2,\ldots, n$. Next, we will establish the condition for the fixed point to be locally attracting. This condition will ensure that if we iterate the map in \eqref{mapH} with an initial condition that is close to the solution, then we will obtain the solution.

For local convergence, we want to show that eigenvalues of the Jacobian matrix $D{\bf H}_{i}^*:=D{\bf H}_{i}(\bm{\mu}^{(i)})$ are in the interior of the unit ball of the complex plane. One can verify that the components of the Jacobian matrix $D{\bf H}_{i}^*$ are given by,
\bes
\frac{\partial \bm{H}_{i,1}^*}{\partial\lambda_j} &=& -(\mathbf{F}_{i-1,\bm{\lambda}_{i-1}}^*)^{-1} \mathbf{F}_{i-1,\lambda_{i}}^*\frac{\partial H_{i,2}^*}{\partial\lambda_j},\label{H1}\\
\frac{\partial H_{i,2}^*}{\partial\lambda_j} &=& \delta_{j,i} - \Big(\frac{\partial F_{i}^*}{\partial \lambda_{i}}\Big)^{-1}\frac{\partial F_{i}^*}{\partial \lambda_j},\label{H2}
\ees
for $j=1,\ldots,i$, where we have used all the three conditions in the Assumption~1 (see Appendix~\ref{Jac} for the detailed derivation).  Here, $\delta_{j,i}$ is one only if $j=i$ and zero otherwise. To simplify the discussion below, let's define the following notations,
\bes
J &:=& \mathbf{F}_{i-1,\bm{\lambda}_{i-1}}^*\nonumber\\
{\bf v} &:=& \mathbf{F}_{i-1,\lambda_{i}}^* \label{notations}\\
{\bf c} &:=& \Big(\frac{\partial H_{i,2}^*}{\partial\lambda_1},\ldots, \frac{\partial H_{i,2}^*}{\partial\lambda_{i-1}}\Big)^\top\nonumber
\ees
such that,
\bes
D{\bf H}_{i+1}^*= \begin{pmatrix}  J^{-1}{\bf v}{\bf c}^\top & \vec{0} \\  {\bf c}^\top & 0 \end{pmatrix}\in\mathbb{R}^{i\times i}.\label{DH}
\ees

We can now obtain the following result:
\begin{theorem}\label{thm2}
Let $\bm{\mu}^{(i)}\in\mathbb{R}^{i}$ be a fixed point of \eqref{iterateH} such that the conditions in the Assumption~1 are satisfied. Let's $\sigma_j({\bm F}_{i-1,{\bm\lambda}_{i-1}}^*)$ be the eigenvalues of $\mathbf{F}_{i-1,{\bm\lambda}_{i-1}}^*$ and assume that they satisfy the following order $|\sigma_1|\geq |\sigma_2|\geq \ldots |\sigma_{i-1}|$. If
\begin{equation}
\Big|\Big(\frac{\partial F_{i}^*}{\partial \lambda_{i}}\Big)^{-1} \sum_{j=1}^{i-1} \frac{\partial F_j^*}{\partial \lambda_{i}} \frac{\partial F_{i}^*}{\partial \lambda_j}\Big| < |\sigma_{i-1}(\mathbf{F}_{i-1,\bm{\lambda}_{i-1}}^*)|,\label{assumption}
\end{equation}
 then $\bm{\mu}^{(i)}$ is locally attracting.
\end{theorem}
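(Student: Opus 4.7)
The plan is to establish local attractivity via the standard spectral criterion: since $\bm{H}_i$ is $C^1$ near $\bm{\mu}^{(i)}$ by Assumption~1(3), it suffices to show that the spectral radius of the Jacobian $D\bm{H}_i^*$ is strictly less than one.

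The first step is to read off the spectrum of $D\bm{H}_i^*$ from the block structure displayed in \eqref{DH}. Because the top-right block is the zero column, the matrix is block lower triangular and its eigenvalues are those of the $(i-1)\times(i-1)$ top-left block $J^{-1}\bm{v}\bm{c}^\top$ together with the bottom-right scalar $0$. Since $J^{-1}\bm{v}\bm{c}^\top$ is a rank-one matrix, its only nonzero eigenvalue equals its trace $\mathrm{tr}(J^{-1}\bm{v}\bm{c}^\top)=\bm{c}^\top J^{-1}\bm{v}$, so the spectral radius of $D\bm{H}_i^*$ is exactly $|\bm{c}^\top J^{-1}\bm{v}|$. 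The task therefore reduces to showing $|\bm{c}^\top J^{-1}\bm{v}|<1$ under hypothesis \eqref{assumption}.

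The key structural input I would then exploit is that $J=\mathbf{F}_{i-1,\bm{\lambda}_{i-1}}^*$ is symmetric at the fixed point. Differentiating \eqref{Poly} gives $\partial F_j/\partial \lambda_k=\int (c_j-f_j)c_k\exp(\sum_m\lambda_m c_m)\,d\mathbf{x}$; at the fixed point the identities $F_k^*=0$ for $k=1,\ldots,i$ yield $\int c_k\exp(\sum_m\mu_m^{(i)}c_m)\,d\mathbf{x}=f_kZ^*$, so the expression above reduces to $\int c_jc_k\exp\,d\mathbf{x}-f_jf_kZ^*$, which is manifestly symmetric in $(j,k)$. The same identity forces $\partial F_j^*/\partial\lambda_i=\partial F_i^*/\partial\lambda_j$, i.e.\ the vector $\bm{v}$ coincides with $\bm{u}:=(\partial F_i^*/\partial\lambda_1,\ldots,\partial F_i^*/\partial\lambda_{i-1})^\top$. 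Reading $\bm{c}$ off from \eqref{H2} and \eqref{notations} gives $\bm{c}=-(\partial F_i^*/\partial\lambda_i)^{-1}\bm{u}=-(\partial F_i^*/\partial\lambda_i)^{-1}\bm{v}$, hence $\bm{c}^\top J^{-1}\bm{v}=-(\partial F_i^*/\partial\lambda_i)^{-1}\bm{v}^\top J^{-1}\bm{v}$.

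Finally, symmetry of $J$ makes the operator norm $\|J^{-1}\|_2=1/|\sigma_{i-1}(J)|$ under the stated ordering, and the quadratic-form bound gives $|\bm{v}^\top J^{-1}\bm{v}|\le \|\bm{v}\|^2/|\sigma_{i-1}(J)|$. Using $\bm{u}=\bm{v}$ once more, $\|\bm{v}\|^2=\sum_{j=1}^{i-1}(\partial F_i^*/\partial\lambda_j)^2=\sum_{j=1}^{i-1}(\partial F_j^*/\partial\lambda_i)(\partial F_i^*/\partial\lambda_j)$, the precise sum appearing in \eqref{assumption}. Assembling these pieces yields $|\bm{c}^\top J^{-1}\bm{v}|\le |\partial F_i^*/\partial\lambda_i|^{-1}\|\bm{v}\|^2/|\sigma_{i-1}(J)|<1$, completing the argument. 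The main obstacle I anticipate is the symmetry identification $\bm{u}=\bm{v}$: without it, a generic estimate for $|\bm{x}^\top A^{-1}\bm{y}|$ produces $\|\bm{x}\|\|\bm{y}\|/|\sigma_{\min}(A)|$ rather than a bound in terms of the inner product $|\bm{x}^\top\bm{y}|$, so the specific form of condition \eqref{assumption} would be unreachable. This symmetry is essentially a restatement of Theorem~1 applied simultaneously to all $i$ components of the fixed-point equation, and is the reason why the hypothesis in \eqref{assumption} is phrased in terms of a scalar inner product rather than a product of norms.
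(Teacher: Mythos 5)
Your proof is correct and follows the same skeleton as the paper's: read the spectrum of $D\bm{H}_i^*$ off the block-triangular structure in \eqref{DH}, reduce to the rank-one block, identify its nonzero eigenvalue with a trace, and compare against $|\sigma_{i-1}(\mathbf{F}^*_{i-1,\bm{\lambda}_{i-1}})|$. Where you differ is in how the estimate is closed, and your version is the more careful one. The paper bounds the spectral radius by $\|J^{-1}\|_2\,\|\bm{v}\bm{c}^\top\|_2$ and then uses the identities $\|\bm{v}\bm{c}^\top\|_2=|\sigma(\bm{v}\bm{c}^\top)|$ and $\|J^{-1}\|_2^{-1}=|\sigma_{i-1}(J)|$; neither holds for general matrices (the first needs $\bm{c}\parallel\bm{v}$, since otherwise $\|\bm{v}\bm{c}^\top\|_2=\|\bm{v}\|\,\|\bm{c}\|>|\bm{c}^\top\bm{v}|$, and the second needs $J$ normal, since otherwise $\|J^{-1}\|_2$ is controlled by the smallest singular value rather than the smallest eigenvalue). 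You instead compute the nonzero eigenvalue of $J^{-1}\bm{v}\bm{c}^\top$ exactly as $\bm{c}^\top J^{-1}\bm{v}$ and justify both of the facts the paper leaves implicit by proving the symmetry of the Jacobian of \eqref{Poly} at a point where $F_1=\cdots=F_i=0$ (so that $\bm{v}=\bm{u}$, $\bm{c}=-(\partial F_i^*/\partial\lambda_i)^{-1}\bm{v}$, and $J=J^\top$), after which Cauchy--Schwarz on the quadratic form $\bm{v}^\top J^{-1}\bm{v}$ yields precisely the quantity appearing in \eqref{assumption}. So your route buys a fully justified derivation of the same criterion and, as you note, explains why \eqref{assumption} can be phrased as a scalar inner product rather than a product of norms; the paper's shorter argument is valid only because of the symmetry you made explicit. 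Two cosmetic remarks: the top-left block of \eqref{DH} should carry a minus sign coming from \eqref{H1}, which is irrelevant for the spectral radius, and the spectral-radius-to-norm step is in general an inequality rather than an equality; neither affects your argument.
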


\begin{proof}
From  \eqref{DH}, we only need to analyze the eigenvalues of $J^{-1}{\bf v}{\bf c}^\top$. From basic matrix theory, recall that the magnitude of the largest eigenvalue can be bounded above as follows,
\bes
|\sigma_1(J^{-1}{\bf v}{\bf c}^\top)| = \|J^{-1}{\bf v}{\bf c}^\top\|_2 \leq \|J^{-1}\|_2 \| {\bf v}{\bf c}^\top \|_2,\nonumber
\ees
where $\|\cdot\|_2$ denotes the matrix $\ell_2$-norm. For the fixed point to be locally attracting, all of the eigenvalues of $J^{-1}{\bf v}{\bf c}^\top$ have to be in the interior of the unit ball in the complex plane. This means that we only need to show that $\|J^{-1}\|_2 \| {\bf v}{\bf c}^\top \|_2 < 1$ or $\| {\bf v}{\bf c}^\top \|_2 < |\sigma_{i-1}(J)|$, where $\sigma_{i-1}(J)$ denotes the smallest eigenvalue of the $(i-1)\times (i-1)$ matrix $J$ following the ordering in the hypothesis.

Since $\mbox{Tr}({\bf v}{\bf c}^\top) = \sum_{j=1}^i \sigma_j({\bf v}{\bf c}^\top)$ and ${\bf v}{\bf c}^\top$ is a rank-one matrix, then its nontrivial eigenvalue is given by,
\bes
\sigma({\bf v}{\bf c}^\top) = \mbox{Tr}({\bf v}{\bf c}^\top) = \sum_{j=1}^{i-1} \frac{\partial F_{j}^*}{\partial \lambda_{i}} \frac{\partial H_{i,2}^*}{\partial\lambda_j} = - \sum_{j=1}^{i-1} \frac{\partial F_j^*}{\partial \lambda_{i}} \frac{\partial F_{i}^*}{\partial \lambda_j}\Big(\frac{\partial F_{i}^*}{\partial \lambda_{i}}\Big)^{-1},\nonumber
\ees
where we have used the definitions in \eqref{notations} and the second component in \eqref{H2}. From the assumption in \eqref{assumption}, we have
\bes
 \| {\bf v}{\bf c}^\top \|_2 = |\sigma({\bf v}{\bf c}^\top)| = \Big|\Big(\frac{\partial F_{i}^*}{\partial \lambda_{i}}\Big)^{-1} \sum_{j=1}^{i-1} \frac{\partial F_j^*}{\partial \lambda_{i}} \frac{\partial F_{i}^*}{\partial \lambda_j}\Big|  < |\sigma_{i-1}(J)|,\nonumber
\ees
and the proof is completed.
\end{proof}

This theorem provides the conditions for local convergence on each iteration-$i$. In particular, if the hypothesis in Theorem~\ref{thm2} is satisfied, we will find the solutions to \eqref{Fi} by iterating \eqref{iterateH} provided that we start with a sufficiently close initial condition. Notice also that this condition suggests that in practice the local convergence will be difficult to satisfy if the Jacobian matrix ${F}_{i-1,\bm{\lambda}_{i-1}}$ is close to singular. With these two theorems, we can now establish

\begin{theorem}
Let $\bm{\mu}^{(n)}\in\mathbb{R}^n$ be the solution of the n-dimensional system of equations in \eqref{Fn}. We assume the hypothesis in Theorem~\ref{thm2}, then the EBE method is locally convergent.
\end{theorem}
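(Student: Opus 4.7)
The plan is to prove local convergence by induction on the outer iteration index $i = 1, 2, \ldots, n$, combining Theorem~\ref{thm2} at each level with the continuous dependence of the intermediate fixed points $\bm{\mu}^{(i)}$ on the frozen initial data $(\alpha_{i+1}, \ldots, \alpha_n)$. I interpret \emph{local convergence} as follows: there is a neighborhood $V$ of $\bm{\mu}^{(n)}$ in $\mathbb{R}^n$ such that, for any initial guess $(\alpha_1, \ldots, \alpha_n) \in V$ and for inner tolerances $Tol_1, Tol_2$ sufficiently small, the EBE output $\bm{\hat{\mu}}^{(n)}$ converges to $\bm{\mu}^{(n)}$.

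The base case $i = 1$ reduces to scalar Newton's method applied to $F_1(\cdot, \alpha_2, \ldots, \alpha_n)$; Assumption~1 parts (1) and (3) yield standard local convergence, so $\bm{\hat{\mu}}^{(1)}$ can be made arbitrarily close to $\bm{\mu}^{(1)}$. For the induction step, suppose $\|\bm{\hat{\mu}}^{(i-1)} - \bm{\mu}^{(i-1)}\| < \varepsilon_{i-1}$. Theorem~\ref{thm2} supplies a radius $r_i > 0$ such that any initial point within distance $r_i$ of $\bm{\mu}^{(i)}$ is attracted to $\bm{\mu}^{(i)}$ by the iteration \eqref{iterateH}. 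I would then apply the implicit function theorem twice: first, using invertibility of $\mathbf{F}_{i-1,\bm{\lambda}_{i-1}}$ at $\bm{\mu}^{(i)}$ (Assumption~1(2)), to show that the first $i-1$ components of $\bm{\mu}^{(i)}$, denoted $\bm{\mu}^{(i)}_{i-1}$, satisfy $\|\bm{\mu}^{(i-1)} - \bm{\mu}^{(i)}_{i-1}\| = O(|\alpha_i - \mu^{(i)}_i|)$; and second, at the top level, to show that $\bm{\mu}^{(i)}$ depends continuously on $(\alpha_{i+1}, \ldots, \alpha_n)$ and coincides with the first $i$ components of $\bm{\mu}^{(n)}$ when $\alpha_j = \mu^{(n)}_j$ for $j > i$. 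Shrinking $V$ makes $|\alpha_i - \mu^{(i)}_i|$ arbitrarily small, and shrinking the tolerances makes $\varepsilon_{i-1}$ arbitrarily small; a triangle inequality then places $(\bm{\hat{\mu}}^{(i-1)}, \alpha_i)$ within $B_{r_i}(\bm{\mu}^{(i)})$, so Theorem~\ref{thm2} yields convergence of the inner loop at step $i$ to $\bm{\mu}^{(i)}$, closing the induction.

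The main obstacle I anticipate is making the induction uniform: the basin radii $r_i$, the Lipschitz constants supplied by the implicit function theorem, and the propagated errors $\varepsilon_i$ all depend on the fixed points $\bm{\mu}^{(i)}$ and the local behavior of their Jacobians, and one must verify that the accumulated error after $n$ levels can be controlled below $\min_i r_i$ by a single choice of $V$ and of tolerances. Because there are only finitely many levels and each relevant quantity is continuous by Assumption~1(3), this is possible but bookkeeping-heavy; the cleanest form of the final statement should assert only the existence of the required neighborhood $V$ rather than sharp quantitative bounds.
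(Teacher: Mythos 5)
Your proposal is correct and follows essentially the same route as the paper's proof: an induction over the outer index $i$, using the implicit function theorem via Assumption~1.2 to get the locally defined solution curve and continuous dependence of $\bm{\mu}^{(i)}$ on the frozen parameters, and invoking Theorem~\ref{thm2} at each level so that the inner iteration \eqref{iterateH}, started near $\bm{\mu}^{(i)}$, converges. The only difference is presentational: the paper phrases the argument geometrically in terms of the nested surfaces $\mathcal{M}_n\supset\cdots\supset\mathcal{M}_1$ and tracking along $\bm{\lambda}_{i-1}=\bm{h}_{i-1}(\lambda_i)$, while you make the error-propagation bookkeeping (radii $r_i$, tolerances, triangle inequality over finitely many levels) more explicit than the paper does.
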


\begin{proof}
Choose an initial condition, $(\alpha_1,\ldots,\alpha_n)$, that is sufficiently close to the solution $\bm{\mu}^{(n)}$ of ${\bf F}_n(\bm{\lambda}_n)={\bf 0}$. First, let us define the surface $F_1(\lambda_1,\ldots,\lambda_n)=0$ as $\mathcal{M}_{n}$; here, the dimension of $\mathcal{M}_n$ is at most $n-1$. Subsequently, we define the surfaces
${\bf F}_2(\bm{\lambda}_n)=\bm{0}$ as $\mathcal{M}_{n-1}$,  ${\bf F}_3(\bm{\lambda}_n)=\bm{0}$ as $\mathcal{M}_{n-2}$, and so on. The dimension of $\mathcal{M}_{j}$ is at most $j-1$. We assume that ${\bf F}_n(\bm{\lambda}_n)=\bm{0}$ has at least a solution, then $\mathcal{M}_1$ contains the solution $\bm{\mu}^{(n)}$. It is clear that $\mathcal{M}_n \supset \mathcal{M}_{n-1} \supset \ldots \supset \mathcal{M}_1$.

For $i=1$, we solve $F_1(\lambda_1,\alpha_2,\ldots,\alpha_n)=0$ for $\lambda_1$. Geometrically, we look for the first coordinate on the surface $\mathcal{M}_n$. From the Assumption~1.2, we have the local convergence of the usual Newton's iteration. If $\alpha_1$ is sufficiently close to the solution $\bm{\mu}^{(1)}=\mu_1^{(1)}\in\mathbb{R}$, as $m\to\infty$ we obtain the solution $(\mu_1^{(1)},\alpha_2,\ldots,\alpha_n)\in \mathcal{M}_n$. By the smoothness assumption, $(\mu_1^{(1)},\alpha_2,\ldots,\alpha_n)$ is also close to $\bm{\mu}^{(n)}$.

Continuing with $i>1$, we want to solve ${\bf F}_i(\bm{\lambda}_i,\alpha_{i+1},\ldots,\alpha_n)={\bf 0}$ for $\bm{\lambda}_i$. Numerically,
we will apply the iterative map ${\bf H}_i$ in \eqref{iterateH} starting from $(\bm{\mu}^{(i-1)},\alpha_i,\ldots,\alpha_n)\in \mathcal{M}_{n-i+2}$.
By Assumption~1.2, the Jacobian, ${\bf F}_{i-1,\bm{\lambda}_{i-1}}(\bm{\mu}^{(i-1)},\alpha_i,\ldots,\alpha_n)$ is nonsingular so by implicit function theorem, for any local neighborhood $V$ of $\bm{\mu}^{(i-1)}$, there exists a neighborhood $U$ of $ \alpha_{i}$ and a $C^1$ function $\bm{h}_{i-1}:U\to V$ such that $\bm{\mu}^{(i-1)}=\bm{h}_{i-1}(\alpha_i)$ and ${\bf F}_{i-1}(\bm{h}_{i-1}(\lambda_i),\lambda_i,\alpha_{i+1},\ldots,\alpha_n) =0$ for all $\lambda_i\in U$. Since the initial condition $\alpha_i$ is close to $\mu_i^{(n)}$, by the smoothness assumption it is also close to $\mu_{i}^{(i)}$ that solves ${\bf F}_i(\bm{\lambda}_i,\alpha_{i+1},\ldots,\alpha_n)=0$. The continuity of $\bm{h}_{i-1}$ on $U$ means that $(\bm{\mu}^{(i)}_{i-1},\mu_i^{(i)}) \in V\times U$. Geometrically, this means the surface $F_i(\bm{\lambda}_i,\alpha_{i+1},\ldots,\alpha_{n})=0$ intersects with the curve $\bm{\lambda}_{i-1}=\bm{h}_{i-1}(\lambda_i)$ at $\bm{\mu}^{(i)} = (\bm{\mu}^{(i)}_{i-1},\mu_i^{(i)})$. Therefore, we can find the solution for this $i$-dimensional system by tracking along the curve $\bm{\lambda}_{i-1}=\bm{h}_{i-1}(\lambda_i)$ where we consider $\lambda_i$ as an independent parameter. The iterative map ${\bf H}_i$ in \eqref{mapH} is to facilitate this tracking and the conditions in Theorem~\ref{thm2} guarantee convergence to the solution. Notice that during this iteration, the solution remains on $\mathcal{M}_{n-i+2}$. The solution for this i-dimensional problem is $(\bm{\mu}^{(i)},\alpha_{i+1},\ldots,\alpha_n)\in\mathcal{M}_{n-(i+1)+2}\subset\mathcal{M}_{n-i+2}\subset \ldots\subset\mathcal{M}_{n}$. Continuing with the same argument, we find that for $i=n$, $\bm{\mu}^{(n)}\in \mathcal{M}_1\subset \mathcal{M}_n$.
\end{proof}

This iterative procedure finds the solution by searching along the manifold $\mathcal{M}_n$ in the direction of the hypersurfaces of a single parameter at a time, which local existence is guaranteed by the Assumption~1. It is clear that after each step-$i$, the estimated solution may not necessarily be closer to the true solution since the estimates do not minimize the closest path to the true solution along the manifold $\mathcal{M}_n$ (or the geodesic distance). This means that, locally,
\bes\|(\bm{\mu}^{(i+1)},\alpha_{i+2},\ldots,\alpha_n)-\bm{\mu}^{(n)}\|\leq \|(\bm{\mu}^{(i)},\alpha_{i+1},\ldots,\alpha_n)-\bm{\mu}^{(n)}\|\nonumber\ees for $i<n-1$ is not true.

In practice, when initial conditions are not closed to the solution, the (global) convergence of EBE requires the following additional condition: For every $i$, there exists a nonempty connected set that contains $(\bm{\mu}^{(i)},\alpha_{i+1})$ and $\bm{\mu}^{(i+1)}$ such that $\mathbf{F}_{i,\bm{\lambda}_{i}}$ evaluated at any point in this set is nonsingular. The existence of this set will allow us to build a path to connect these two points that are far apart. If this condition is not met, we need an additional treatment to overcome this issue which will be discussed in the next section.

%If this condition is not met, we suspect that one needs to find a complex valued path and the whole algorithm need to be generalized to include finding complex valued solutions, which is beyond the scope of this paper. Nevertheless, for the entropy problems in the next section we always find the solution in our numerical test starting with $\alpha_i=0, i=1,\ldots,n$.

\section{Practical challenges}\label{section5}
In this section, we will discuss several practical challenges related to our algorithm with remedies. They include non-locality of the initial condition, mistracking due to multiple solutions, non-existence of solutions within the desired numerical tolerance, and the computational complexity.

\subsection{Adaptive tracking}
As we mentioned in the previous section, the EBE method only converges locally, which means that it requires an adequate initial condition which is practically challenging. In our numerical simulations below, in fact, we always start from zero initial condition, $\alpha_i=0, \forall i=1,\ldots, n$. In this case, notice that even when we obtain an accurate solution at step-$i$, that is, ${\bf F}_{i}(\bm{\hat \mu}^{(i)})\approx {\bm 0}$, as we proceed to the next iteration, $|{F}_{i+1}(\bm{\hat\mu}^{(i)},\alpha_{i+1})|\gg 0$, meaning that $(\bm{\hat\mu}^{(i)},\alpha_{i+1})$ is not close to the solution, $\bm{\mu}^{(i+1)}$. Even when $\frac{\partial F_{i+1}}{\partial \lambda_{i+1}}(\bm{\hat\mu}^{(i)},\alpha_{i+1})$ is not singular, according to equation (\ref{scalarnewton}), $\lambda_i^{m+1}$ could be very far away from $\lambda_i^m$. In this case, Newton's method could fail in Eq. (\ref{corrector}) because the initial guess could be very far from the solution.

As a remedy, we employ an adaptive tracking on $\lambda_i$ to guarantee that the application of Newton's method is within its zone of convergence for each predictor-corrector step. The idea of the adaptive tracking is that we cut the tracking step, $\Delta \lambda_i := \lambda_{i+1}-\lambda_i$, by half until the prediction-correction step in (\ref{predictor})-(\ref{corrector}) converges. The detail algorithm is outlined below.

\begin{algorithm}[H]
\small\SetAlgoLined
 \SetAlgorithmName{Algorithm}{problem}{List of problems}
\SetKwInOut{Input}{Input}\SetKwInOut{Output}{Output} %\LinesNumbered
\Input{Minimum step size $\lambda_{min}$ and threshold value of $Tol$.}
Compute $\Delta \lambda_i$ by using Newton's method to solve $F_i=0$.\\
Set $Final=\Delta \lambda_i$\\
\While{$|Final|>0$}{
Solve $\mathbf{F_{i-1}}(\mathbf{\lambda_{i-1}},\lambda_i+\Delta \lambda_i)=0$ by using Newton's method;\\
\eIf{Newton's method fails}{$\Delta \lambda_i=\Delta \lambda_i/2$\\\If{$\Delta \lambda_i<\lambda_{min}$}{Discard the $i$-th equation}}{$Final=Final-\Delta \lambda_i$\\$\Delta\lambda_i=\min\{\Delta\lambda_i,Final\}$}
} \caption{Summary of adaptive tracking algorithm}
\end{algorithm}

\subsection{Bifurcation}
In order to solve $F_{i}(\lambda_1,\lambda_2,\cdots,\lambda_i)=0$, we track $\bm{F}_{i-1}(\bm{\lambda}_{i-1},\lambda_i)=\mathbf{0}$ along $\lambda_{i}$ as a parameter. During this parameter tracking, we may have some bifurcation points of $\lambda_{i}$ for the nonlinear system $\bm{F}_{i-1}(\bm{\lambda}_{i-1},\lambda_i)=\bm{0}$. This means that the Jacobian, $\bm{F}_{i-1,\bm{\lambda}_{i-1}}(\bm{\lambda}_{i-1},\lambda_i)$ is rank deficient such that $\bm{F}_{i-1}(\bm{\lambda}_{i-1},\lambda_i)=\bm{0}$ has multiple solutions $\bm{\lambda}_{i-1}$ for a given $\lambda_i$. In this situation, $F_i$ has multiple realizations functions of $\lambda_i$ (see the illustration in Figure~\ref{Fig:challenges} where the bifurcation point is the intersection of the two possible realizations of $F_i$). In this illustration, the goal is to track along the red branch to find the root, $F_i(\lambda_i)=0$. As we get closer to the bifurcation point, the Jacobian, $\bm{F}_{i-1,\bm{\lambda}_{i-1}}(\bm{\lambda}_{i-1},\lambda_i)$, is singular such that we can't evaluate (\ref{predictor}). Intuitively, the existence of multiple solutions near the bifurcation point induces a possibility of mistracking from the red curve to the green curve (as shown by the arrows) which prohibits one to find the solution.

%Thus $F_{i}$ is reduced to be a function of $\lambda_i$ as shown in \ref{Fig:challenges}.
\begin{figure}\centering
\includegraphics[width=.5\textwidth]{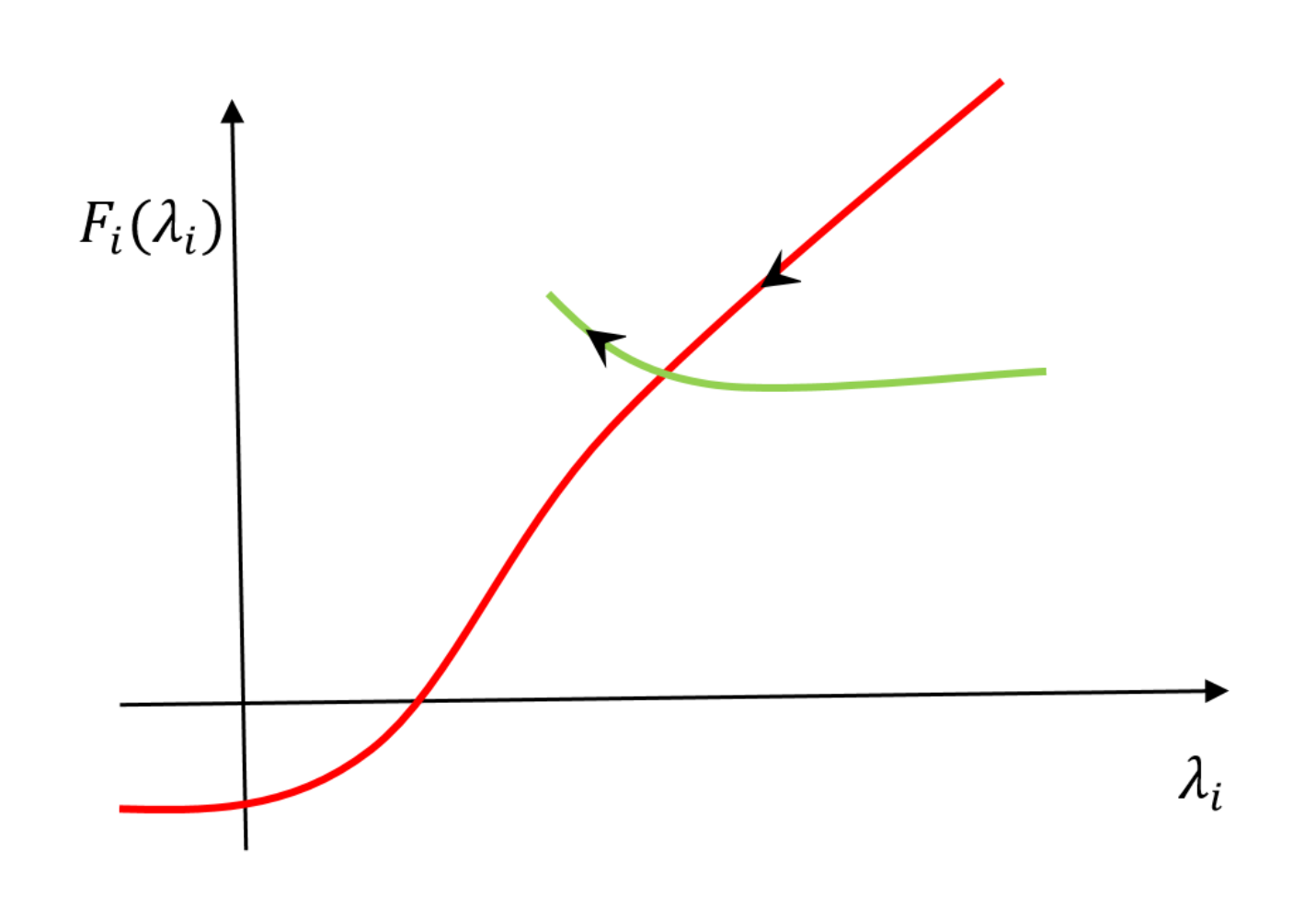}\includegraphics[width=.5\textwidth]{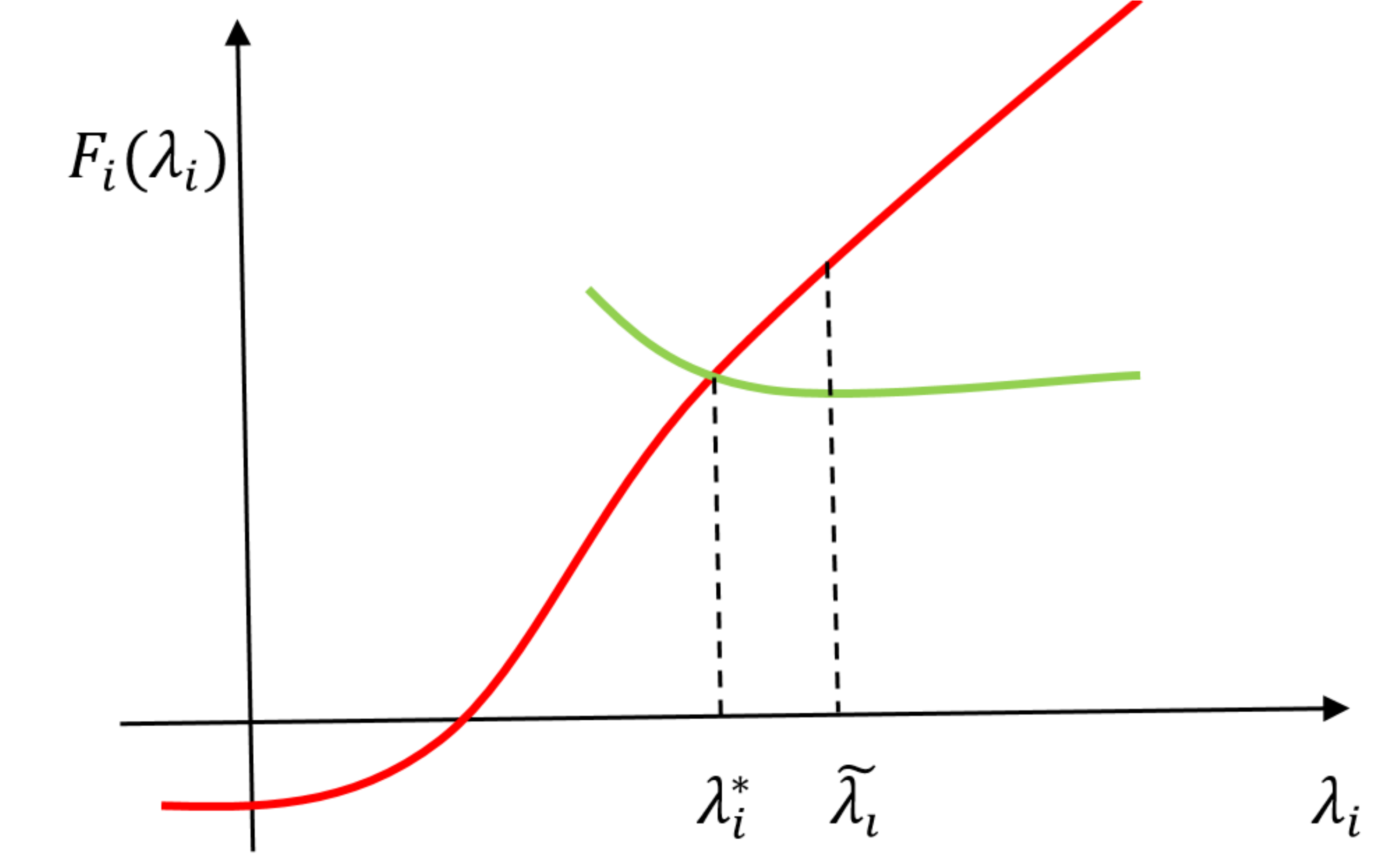}
\caption{Plot of $F_i(\lambda_i)$ v.s. $\lambda_{i}$: There are two bifurcation branches for the nonlinear system $\mathbf{F}_{i-1}(\mathbf{\lambda_{i-1}},\lambda_i)=\mathbf{0}$. The left part is a mistracking example; the right part is the illustration of a numerical method to avoid the bifurcation point.} %{\bf Right:} there is no solution of $\lambda_i$ for $F_i{\lambda_i}=0$ although $F_i{\lambda_i}$ could be very small.}
\label{Fig:challenges}
\end{figure}

To avoid such mistracking, we apply the deflation technique to compute the bifurcation point directly \cite{HHHLSZ2,LVZ}. Once the bifurcation point is estimated, we approximate the correct branches using the Richardson extrapolation to avoid mistracking. Denote the bifurcation point as $\lambda_i^*$, the nonlinear system $\bm{F}_{i-1}(\bm{\lambda}_{i-1},\lambda_i)=\bm{0}$ is difficult to solve when $\lambda_{i}$ is close to $\lambda_i^*$ since the Jacobian of $\bm{F}_{i-1}(\bm{\lambda}_{i-1},\lambda_i)$ becomes near singular. If the last attempt is $(\tilde{\bm{\lambda}}_{i-1},\tilde{\lambda}_i)$, we compute $(\bm{\lambda}_{i-1}^*,\lambda_i^*)$ by solving the following deflated system:
$$G(\bm{\lambda}_{i-1}^*,\lambda_i^*,\bm{v})=\left[\begin{array}{c}
\bm{F}_{i-1}(\bm{\lambda}_{i-1},\lambda_i)\\
\bm{F}_{i-1,\bm{\lambda}_{i-1}}(\bm{\lambda}_{i-1},\lambda_i)\bm{v}\\
\bm{\xi}^T\bm{v}-1
   \end{array}
   \right]=\bm{0},
$$ where $\bm{v}$ is the kernel of $\bm{F}_{i-1,\bm{\lambda}_{i-1}}(\bm{\lambda}_{i-1},\lambda_i)$  and $\bm{\xi}$ is a random vector to guarantee that $\bm{v}$ is not a zero eigenvector. In this case, $G(\bm{\lambda}_{i-1}^*,\lambda_i^*,\bm{v})$ is well-conditioned \cite{HHHLSZ2,LVZ}. Once the bifurcation point $(\bm{\lambda}_{i-1}^*,\lambda_i^*)$ is estimated, we can avoid mistracking by setting $\lambda_i=2\lambda_i^*-\tilde{\lambda}_i$ and solve  $\bm{F}_{i-1}(\bm{\lambda}_{i-1},\lambda_i)=\bm{0}$ by using Newton's method with an initial guess $2\bm{\lambda}_{i-1}^*-\tilde{\bm{\lambda}}_{i-1}$ (which is a Richardson extrapolation).

\subsection{Nonexistence of solutions}
In general, the moment constrained maximum entropy problems may not necessarily have solutions. Even when the  solutions exist theoretically, they could be difficult to find numerically due to the noisy dataset, error in the numerical integration, etc. In this case, we simply discard the equation $F_i$ when the minimum is larger than the desired tolerance. This feature (discarding the constraints that give no solutions) is only feasible in the EBE algorithm. However, some theories are needed to preserve the convexity of the polynomials in the exponential term of Eq. (\ref{expoly}) while discarding some of these constraints. In our numerical simulations below, we handle this issue by re-ordering the constraints. In particular, for a problem with moment constraints up to order-$4$, we include the constraints corresponding to $\mathbb{E}[x_i^4]$ $(i=1,\cdots,d)$ in the earlier step of the EBE iterations to avoid these constraints being discarded. Note that this method is sensitive to ordering, that is, different ordering of constraints  yields different path to compute the solution. Therefore, a systematic ordering technique that simultaneously preserves the convexity of the polynomial in the exponential term of Eq. (\ref{expoly}) is an important problem to be addressed in the future.

\subsection{Computational complexity}
The most expensive computational part in EBE is the numerical evaluation of (\ref{Poly}). For a fast numerical integration, we store the monomial basis $c_{\bm{j}}(\bm{x})$ as a matrix of size $N_\ell\times n$, where $N_\ell$ is the number of sparse grid points and $n$ is number of monomial basis. In this case, the computational cost in evaluating $F_j$ is $(2j+1)N_\ell$ ($j-1$ additions, $j+1$ multiplications and 1 subtraction for each grid point), excluding the computational cost for exponential function evaluation, which is on the order of $\log^2m$ to obtain an error of resolution $2^{-m}$ \cite{ahrendt:99}.
For the $i$-th iteration of the EBE algorithm, the computational cost to evaluate the $i$-dimensional system $\bm{F}_i$ is
 $\sum_{j=1}^i(2j+1)N_\ell=\frac{i^2+i}{2}N_\ell$, excluding the exponentiation.
 %Therefore, assuming the number of Newton's iteration for $i$-th step is $M_i$, the total computational cost of this algorithm is $\sum_{i=1}^n\frac{i^2+i}{2}N_\ell\times %M_i=O(n^3N_\ell)$.

\section{Numerical results}\label{section6}

In this section, we show numerical results of the EBE method on five examples. In all of the simulations below, unless stated, we set the Newton's tolerance $Tol_1={10^{-1}}$ and the predictor tolerance $Tol_2={10^{-10}}$. %All experiments are carried out using a 2.33 GHz quad-core Xeon 5410 processor running 64-bit Linux. All timing measures are elapsed time in seconds.
In the first test example, we will describe how the EBE method works on each iteration. The goal of the second example is to demonstrate the global convergence with solutions that are far away from initial condition, $\alpha_j=0$. In particular, we will test the EBE method on a problem with solutions, $\lambda_j$, that have magnitudes ranging from order $10^0-10^3$. In this example, we will show the robustness of the estimate as a function of the number of integration points (or the sparse grid level $\ell$). The third example is to demonstrate the performance on high dimensional problems (with $70\leq n\leq 310$ of order hundreds), induced from order-four moments of four to seven dimensional density functions. While these first three examples involve estimating densities of the form \eqref{expoly}, in the next two examples, we also test the EBE method to estimate densities from a given data set where the maximum entropy solutions may or may not exist. {\color{black} The first data-driven problem is to estimate densities of the first two leading EOFS of the wind stress-driven large-scale oceanic model \cite{abramov2009,abramov2010}. The second data-driven problem is to estimate two- to five-dimensional densities arising from solutions of the Kuramoto-Sivashinsky equation. In these two problems, we compare our method with the classical Newton's method, the MATLAB built-in solver {\asciifamily fsolve.m}, and the previously developed BFGS-based method \cite{abramov2009,abramov2010}.}

%in which the BFGS method does not work whereas the other two methods only work for the two-dimensional case.

\begin{examp}\label{Ex1}
We consider a simple example $\rho(x)\propto \exp(x+x^2+x^3)$ for $x\in[-1,1]$ so that the exact solution is ${\bm{\lambda}}=(1, 1, 1)$. Here, the moments $f_j$ can be computed numerically as follows,
\[f_j=\frac{\int_{-1}^1 x^j \rho(x)dx}{\int_{-1}^1 \rho(x)dx},\quad \hbox{~for~} i=1,2,3.\] In order to numerically integrate both the denominator and numerator, we used a regular one-dimensional sparse grid of level $\ell=7$ (the number of nodes is 65). Our goal here is to illustrate the method and to show the trajectory of the solutions after each iteration of the inner loop $m$ and outer loop $i$.
In Figure~\ref{Fig:exp}, we show the surface of $F_1(\lambda_1,\lambda_2,\lambda_3)=0$ (grey). For $i=1$, we solve the $F_1(\lambda_1,0,0)=0$, after three iterations ($m=3$) the solution converges to $\lambda_1=2.3$ (see Table~\ref{Tab:iter}). For $i=2$, we start with this solution and introduce the second variable $\lambda_2$ for solving the second equation $F_2(\lambda_1,\lambda_2,0)=0$ with constraint $F_1(\lambda_1,\lambda_2,0)=0$. Here, the solution follows the path $\lambda_1=h_1(\lambda_2)$ thanks to the implicit function theorem (black curve). Numerically, a sequence of (green) points following this path converges to a point that satisfies $F_1(\lambda_1,\lambda_2,0)=F_2(\lambda_1,\lambda_2,0)=0$ (the green point in the intersection between black and red curves in Figure~\ref{Fig:exp}). In the next iteration $i=3$, we introduce the third variable $\lambda_3$ for solving the third equation $F_3(\lambda_1,\lambda_2,\lambda_3)=0$ with constraints $F_1(\lambda_1,\lambda_2,\lambda_3)=F_2(\lambda_1,\lambda_2,\lambda_3)=0$. By the implicit function theorem, we have $(\lambda_1,\lambda_2)=h_2(\lambda_3)$  that satisfies $F_1(h_2(\lambda_3),\lambda_3)=F_2(h_2(\lambda_3),\lambda_3)=0$, which is shown in red curve in Figure~\ref{Fig:exp}. On this red curve, we have a sequence of (blue) points which converges to the solution of the full system (cyan point shown in Figure~\ref{Fig:exp}). The coordinate of the solution on each iteration is shown in Table~\ref{Tab:iter}. Notice that the solutions always lie on the surface $F_1(\lambda_1,\lambda_2,\lambda_3)=0$.

\begin{figure}\centering
\includegraphics[width=4in]{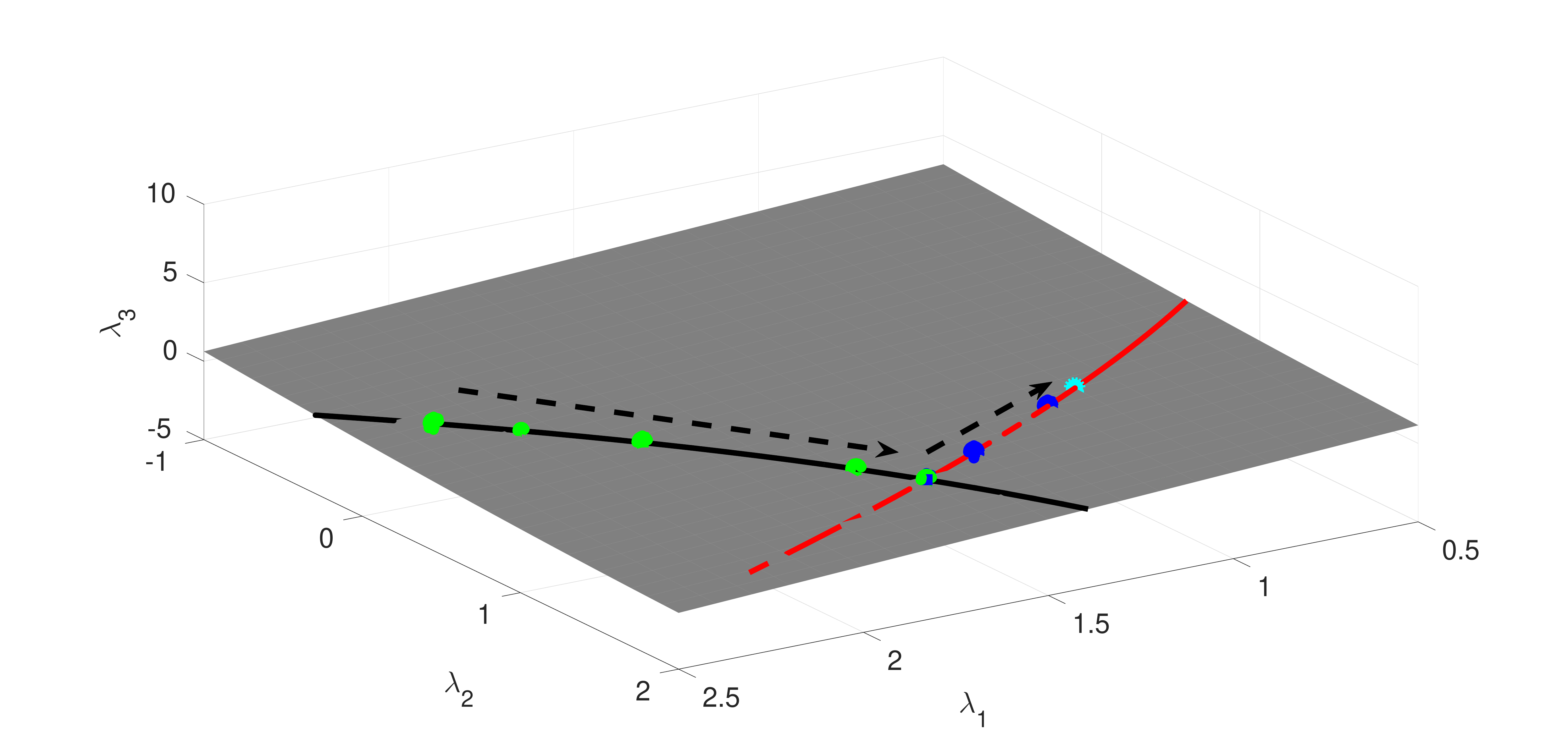}
\caption{The illustration of Example~\ref{Ex1}: The black curve is $\lambda_1=h_1(\lambda_2)$, the green points are the iterations when we solved $F_1(\lambda_1,\lambda_2,0)=0$; The red curve is $(\lambda_1,\lambda_2)=h_2(\lambda_3)$, the blue points are the iterations when we solved $F_1(\lambda_1,\lambda_2,\lambda_3)=F_2(\lambda_1,\lambda_2,\lambda_3)=0$; The cyan point is the numerical solution.}\label{Fig:exp}
\end{figure}

\begin{table}
\renewcommand{\arraystretch}{1.8}
\begin{center}
\caption{The coordinate of the solutions of Example~\ref{Ex1} for each iteration, starting from $(0,0,0)$. For each outer loop $i$, the EBE takes few iterates ($m$) to find the $i-$dimensional solution, fixing $\lambda_{j}=\alpha_j=0$ for $j>i$.} \label{Tab:iter}
\begin{tabular}{|c|c|c|c|}\hline
\backslashbox{$m$}{$i$}&1&2&3\\\hline
0 &(0,0,0)&{\color{green}{(2.30,0,0)}}&{\color{blue}{(1.58,1.43,0)}}\\\hline
1&(1.76,0,0)&{\color{green}{(2.23,0.22,0)}}&{\color{blue}{(1.52,1.38,0.26)}}\\\hline
2&(2.23,0,0)&{\color{green}{(1.87,0.57,0)}}&{\color{blue}{(1.12,1.09,0.76)}}\\\hline
3&(2.30,0,0)&{\color{green}{(1.67,1.21,0)}}&{\color{cyan}{(1,1,1)}}\\\hline
4&&{\color{green}{(1.58,1.43,0)}}&\\\hline
\end{tabular}
\end{center}
\end{table}

\end{examp}

\begin{examp}\label{Ex2}
We consider a one-dimensional example with up to order-six moment constraints with explicit solution given by, \[\rho(x)\propto\exp\Big(
2x+16x^2+24x^3+96x^4 -256x^5 -1024x^6\Big),\]
as shown in Figure~\ref{Fig:ex2}. This example is a tough test problem since the solution, $\bm{\lambda}=(2,16,24,96,-256,1024)$, has components of order $10^0-10^3$. Similar to Example~\ref{Ex1}, we compute the moments $f_i$ by using a one-dimensional sparse grid of level $\ell=7$ (65 nodes).   The EBE algorithm converges to the exact solution with error, $\|\bm{\lambda}-\bm{\lambda}^*\|=5.44\times10^{-13}$. Since the numerical experiment is performed with an initial condition $\alpha_j=0$ that is far from the solution, this result demonstrates a global convergence of the EBE method.

Next, we investigate the sensitivity of the estimates to the number of sparse grid points used in approximating the integral. In our numerical experiments, we estimate the true moments $f_i$ using one-dimensional sparse grid of level $\ell=20$ (524,289 nodes) and feed these moment estimates into the EBE algorithm. In Figure~\ref{Fig:tol}, we show the error in $\lambda$ (with $\ell_2$ metric) for different levels of the sparse grid from 6 to 15 that are used in the EBE method. Notice that the error decreases as a function of $\mathcal{\ell}$ and the improvement becomes negligible for $\ell>8$.
%In the same plot, we also show the error as a function of the Newton's tolerance in which we fix $\ell=9$ (257 nodes). Here, we don't investigate the sensitivity with respect to the predictor tolerance $Tol_2$ since this tolerance is just to ensure that the predictor step is used.

\begin{figure}\centering
\includegraphics[width=.7\textwidth]{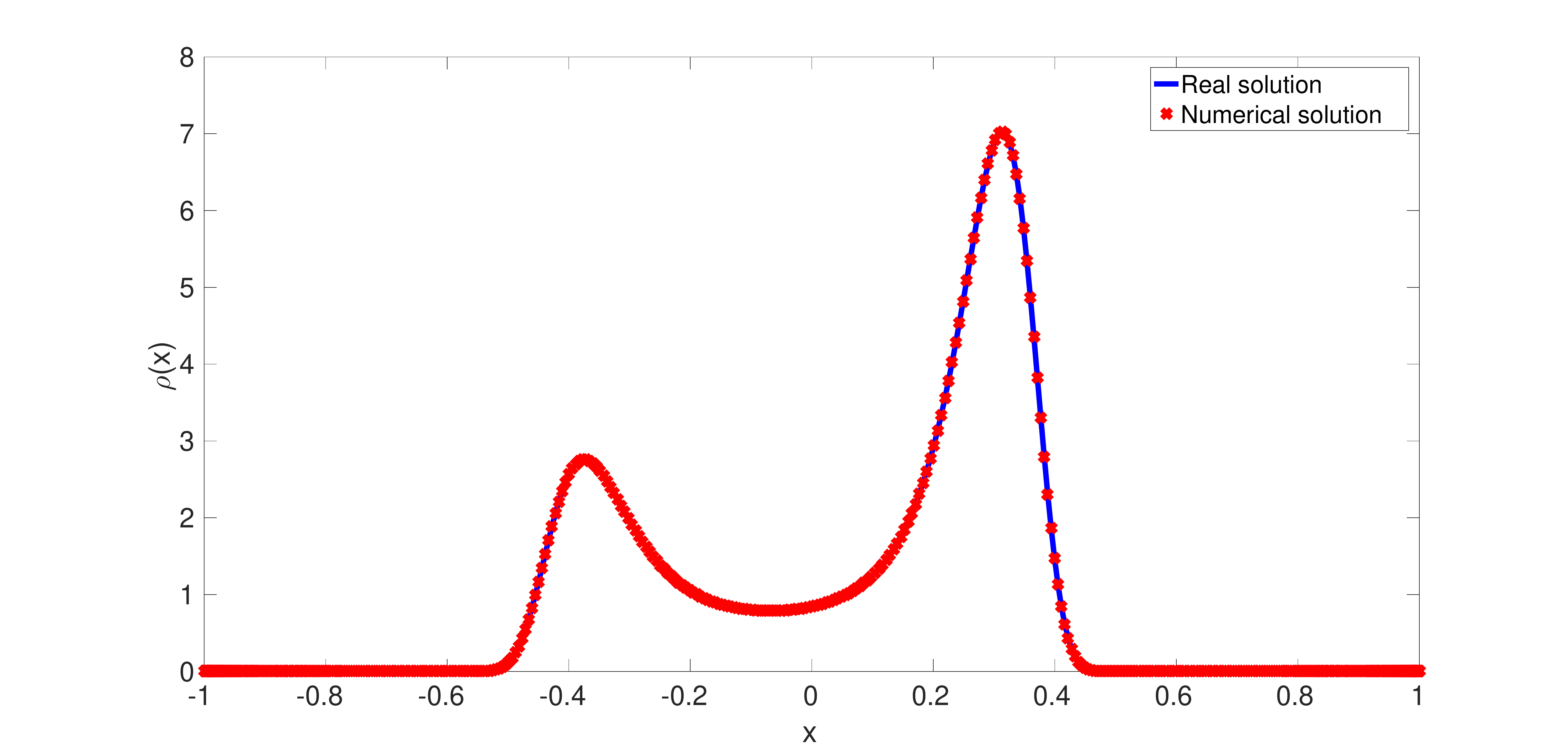}
        \caption{The unnormalized density $\rho(x)$ in Example 2}\label{Fig:ex2}
\end{figure}

\begin{figure}\centering
\includegraphics[width=.7\textwidth]{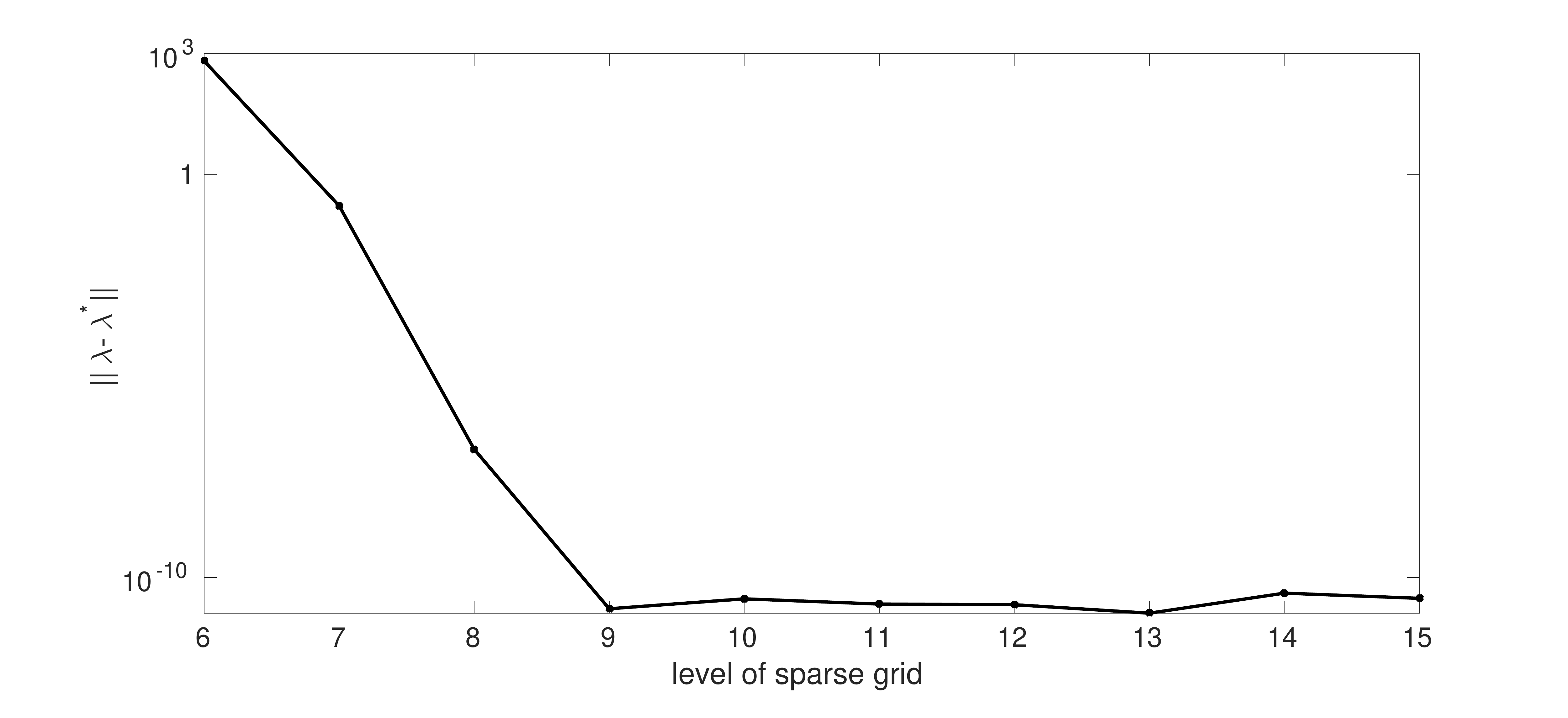}
        \caption{The solution error as a function of the number of sparse grid.}\label{Fig:tol}
\end{figure}

\end{examp}

\begin{examp}\label{Ex3}
In this example, we consider a $d$-dimensional example with
an explicit solution,
\[\rho(x)\propto \exp(-2x_1^4+x_2^3-x_2^4-x_3^4-1.8x_4^4),\]
on domain $\Omega=[-1,1]^d$ where we will vary $d=4,\ldots, 7$. For these simulations, we consider up to order-four moment constraints and fix the sparse grid level $\ell=8$ to compute the integration. %, which yields 19,313 nodes for four-dimensional problem, and we set the Newton's tolerance $Tol_1=??$.

Here, the EBE method is able to estimate $\bm{\lambda}$ with $\ell_2$-errors of order $10^{-13}$ (the error in $\bm{\lambda}$ is $1.11\times10^{-13}$ and moments error is $3.15\times10^{-15}$). %(see Table~\ref{Tab:ex3} below).
In this computation, the dimensions of the nonlinear system are $70$ for $d=4$, $126$ for $d=5$, $210$ for $d=6$, and $310$ for $d=7$.
%The computational time for estimating the solution varies from 1 minute (d=4) to 21 hours (d=7). Notice that the computational time increases exponentially as a function of $d$ (one can plot the logarithmic function of these times and see that it is almost a straight line).
Here, the EBE method is able to recover the true density even if we prescribe more constraints, corresponding to $d$ larger than four.

%\begin{table}
%\renewcommand{\arraystretch}{1.8}
%\begin{center}
% \caption{Summary of solutions for different dimensions for Example~\ref{Ex3}.}\label{Tab:ex3}
%\begin{tabular}{|c|c|c|c|c|}\hline
%d&4&5&6&7 \\\hline
%n&70&126&210&310 \\\hline
%Computing time& 1m13s&12m14s&2h23m12s&21h42m1s \\\hline
%Solution error&\multicolumn{4}{|c|}{$1.11\times10^{-13}$}\\\hline
%moments error&\multicolumn{4}{|c|}{$3.15\times10^{-15}$}\\\hline
%\end{tabular}
%\end{center}
%\end{table}

\end{examp}

\begin{examp}\label{Ex4}

Next, we consider estimating a two-dimensional probability density of the two leading empirical orthogonal functions of a geophysical model for wind stress-driven large-scale oceanic model \cite{mccalpin:95,mh:96}. This is exactly the same test example in the previously developed BFGS-based method \cite{abramov2009,abramov2010}. In fact, the two-dimensional density that we used here was supplied by Rafail Abramov. First, we compare the EBE method with the BFGS algorithm of \cite{abramov2009} which code can be downloaded from \cite{maxentcode}. In this comparison, we use the same uniformly distributed grid points where the total number of nodes are {$85\times85=7,225$}. We set the Newton's tolerance of the EBE algorithm to be $10^{-10}$. In Table~\ref{Tab:comp} notice that the moment errors of the EBE are much smaller compared to those of the BFGS method.

While the EBE is superior compared to BFGS, we should note that the BFGS method does not use the Hessian of ${\bf F}_i$ whereas the EBE does. For a fair comparison, we include results using the MATLAB built-in function {\asciifamily fsolve.m}, which default algorithm is the trust-region-dogleg (see the documentation for detail \cite{fsolve}). In our numerical implementation, we apply {\asciifamily fsolve.m} with a specified Hessian function ${\bf F}_n$. We also include the classical Newton's method with a specified Hessian function ${\bf F}_n$. In this comparison, we use the same sparse grid of level $\ell=11$ (or 7,169 nodes) to compute the two-dimensional integral. Notice that the EBE method is still superior compared to these two schemes as reported in Table~\ref{Tab:comp}. In fact, Newton's method does not converge for higher-order moment constraints. The joint two-dimensional PDFs are shown in Figure~\ref{Fig:comp}. The first row is the two-dimensional density function provided by R. Abramov. The second row shows the EBE estimates using up to order four-, six-, and eight-moment constraints. The third and fourth rows show the BFGS and MATLAB {\asciifamily fsolve.m} estimates, respectively.

\begin{table}
\renewcommand{\arraystretch}{1.8}
\begin{center}
 \caption{Summary of solutions for Example~\ref{Ex4}: Moment errors for different algorithms with different grids.}\label{Tab:comp}
\begin{tabular}{|c|c|c|c|}\hline
Methods&\multicolumn{3}{|c|} {order}\\\cline{2-4}&4&6&8\\\hline
BFGS algorithm with uniform grid&$4.07\times10^{-2}$&$1.45\times10^{-4}$&$1.14\times10^{-2}$\\\hline
EBE algorithm with uniform grid&$1.27\times10^{-11}$&$9.84\times10^{-15}$&$7.75\times10^{-13}$\\\hline
EBE algorithm with sparse grid&$7.54\times10^{-12}$&$8.12\times10^{-15}$&$2.43\times10^{-13}$\\\hline
Matlab {\asciifamily fsolve.m} with sparse grid&$4.70\times10^{-7}$&$1.19\times10^{-4}$&$1.74\times10^{-4}$\\\hline
Newton with sparse grid&$5.12\times10^{-11}$&diverge&diverge\\\hline
\end{tabular}
\end{center}
\end{table}

%\begin{table}[!htb]
%\begin{center}
% \caption{Summary of solutions for Example~\ref{Ex4}: Moments error for EBE versus a BFGS-based algorithm.}\label{Tab:comp}
%\begin{tabular}{|c|c|c|c|c|}\hline
%Order&\multicolumn{2}{|c|}{The EBE algorithm}&\multicolumn{2}{|c|}{The BFGS algorithm}\\\cline{2-3}\cline{4-5} &moments
%error&computing time&moments error&computing time\\\hline
%4&$7.54\times10^{-12}$&1.35 seconds &$4.07\times10^{-2}$&1.01 seconds\\\hline
%6&$8.12\times10^{-15}$&10.09 seconds &$1.45\times10^{-4}$&19.46 seconds\\\hline
%8&$2.43\times10^{-13}$&80.26 seconds &$1.14\times10^{-2}$&55.6 seconds\\\hline
%\end{tabular}
%\end{center}
%\end{table}
\begin{figure}\centering
\includegraphics[width=.4\textwidth]{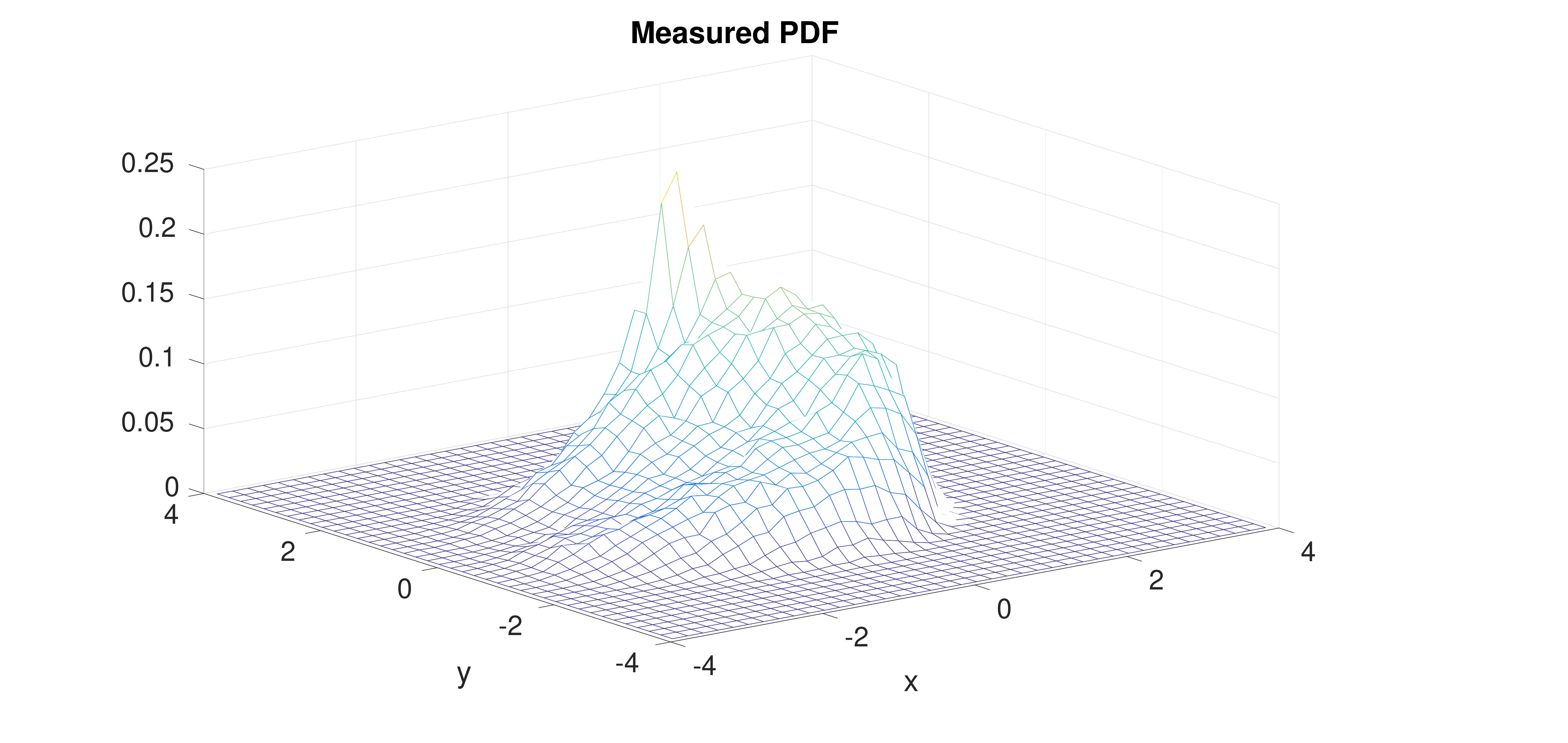}
\includegraphics[width=\textwidth]{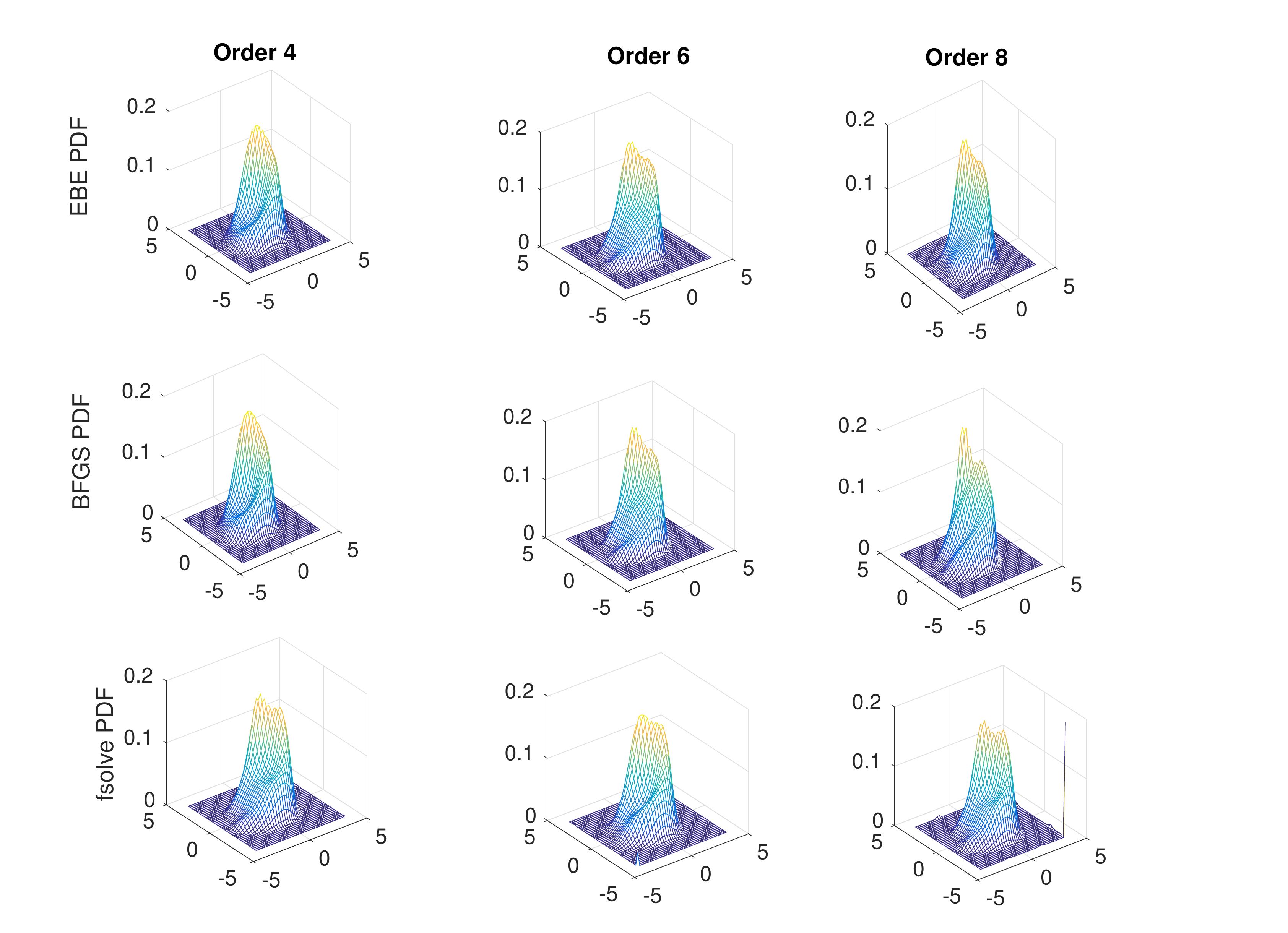}
        \caption{The 2D measured probability density functions supplied by R. Abramov (first row); PDFs computed by the EBE method (second row), BFGS algorithm (third row), and the MATLAB {\asciifamily fsolve.m} function (fourth row).}\label{Fig:comp}
\end{figure}

\end{examp}

\begin{examp}\label{Ex5} In this example, we consider estimating multidimensional densities of the solutions of the Kuramoto-Sivashinsky equation. Here, the solutions are integrated with a fourth-order time-differencing method on $128$ equally spaced grid points over a domain of $[0,32\pi]$ as in \cite{kt:05}. We use initial condition $u(x, 0) = \cos(x/(16\xi))(1 + \sin(x/16))$, where $\xi\sim U[0,1]$ and integration time step of $0.25$. The data is generated by integrating 10,000 time steps. Based on this data set, we randomly select $d$ components and estimate the $d$-dimensional joint density associated to these components. For visual comparison, we also show the results from a two-dimensional kernel density estimation method \cite{rosenblatt1956,parzen1962} as a reference. Numerically, we use the MATLAB built-in function, {\asciifamily ksdensity.m}.
Note that the BFGS algorithm \cite{abramov2009} does not work on this data set while the classical Newton's method only converges for the two-dimensional case. We also show the corresponding results with the MATLAB {\asciifamily fsolve.m} with specified Hessian function as in the previous example. The moment errors of these three schemes are reported in Table~\ref{Tab:KS}.

In Figure~\ref{Fig:KS2d}, we show the two-dimensional density estimated by EBE algorithm compared to those from the {\asciifamily fsolve.m}, the classical Newton's method, and the 2D kernel density estimate.
For the two-dimensional case, the resulting densities are visually identical although the corresponding moment error of the EBE method is still the smallest compared to the Newton's and the MATLAB {\asciifamily fsolve.m} (see Table~\ref{Tab:KS}). In Figure~\ref{Fig:KS3d}, we show the contour plot of the two-dimensional marginal densities obtained from solving the three-dimensional problem given four-moment constraints with the EBE method and the MATLAB {\asciifamily fsolve.m}. For diagnostic purpose, we also provide the corresponding contour plots of the two-dimensional kernel density estimates. Notice that the MATLAB {\asciifamily fsolve.m} produces completely inaccurate estimate. The EBE method produces an estimate that qualitatively agrees to the corresponding two-dimensional KDE estimates. The slight disagreement between these estimates are expected since we only provide up to order-four moments information.

In Figure~\ref{Fig:KS4d}, we show the results for the four-dimensional problem. We do not show the estimate from the MATLAB {\asciifamily fsolve.m} since it is not accurate at all. Here, we include more than four-order moments. Specifically, the total number of constraints for up to order-four moments is 70 while this result is based on 87 constraints, including 17 additional higher-order moment constraints that include order-six moments, $\mathbb{E}[x_i^6], i=1,\ldots, 4$. See the movie of the density estimates for each iteration in the supplementary material \cite{ebemovie}. Notice that the marginal densities estimated by the EBE look very similar to those estimated by the two-dimensional kernel density estimation. If more constraints are included, we found that we lose the convexity of the polynomial terms in \eqref{expoly}. As we mentioned before, we need a better criteria to preserve the convexity of the solutions.

In Figure~\ref{Fig:KS5d}, we include the result from a five-dimensional simulation. We also do not show the estimate from the MATLAB {\asciifamily fsolve.m} since it is not accurate at all. In this five-dimensional case, the EBE method automatically discards 34 equations (moment constraints). %since the solution that satisfies all of the constraints within the tolerance of $10^{-10}$ does not exist.
In this case, we suspect that either the maximum entropy solution that accounts for all of the constraints does not exist or the EBE method cannot find the solution. Here, the EBE method just estimates the best fitted solution within the tolerance of $10^{-10}$ by solving 91 out of 125 moment constraints. %As a quantitative measure of all of the results above, we provide the moment errors corresponding to each dimension $d$ with  $\ell=10$ (Table~\ref{Tab:KS}).

\begin{table}
\renewcommand{\arraystretch}{1.8}
\begin{center}
 \caption{Summary of solutions for Example~\ref{Ex5}.}\label{Tab:KS}
\begin{tabular}{|c|c|c|c|}\hline
d&EBE method& fsolve & Newton\\\hline%& computing time\\\hline
2& $1.098\times10^{-15}$&{$9.779\times10^{-7}$}&{$8.128\times10^{-14}$}\\\hline%&1.78s\\\hline
3& $4.29\times10^{-13}$&{$3.150\times10^{-2}$}&{diverge}\\\hline%&30.67s\\\hline
4& $1.19\times10^{-14}$&{$0.021$}&{diverge}\\\hline%& 34m\\\hline
5& $2.47\times10^{-11}$&{$0.018$}&{diverge}\\\hline%& 2h35m \\\hline
\end{tabular}
\end{center}
\end{table}

\begin{figure}\centering
\includegraphics[width=0.9\textwidth]{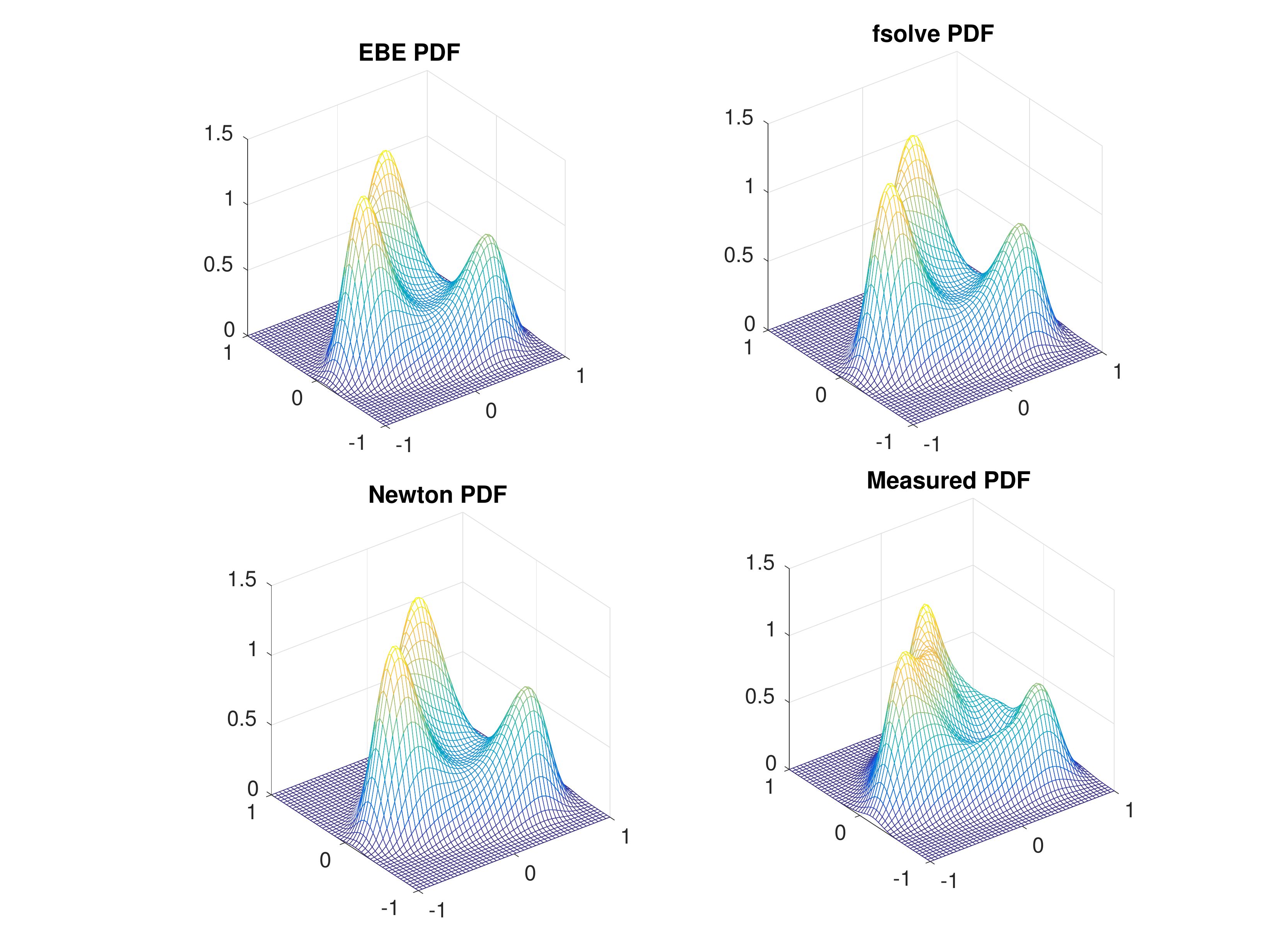}
        \caption{The comparison of the density functions obtained by the EBE algorithm, the MATLAB {\asciifamily fsolve.m} function, Newton's method, and the kernel density estimate (denoted as the measured pdf) for the two-dimensional case.}\label{Fig:KS2d}
\end{figure}
\begin{figure}\centering
\includegraphics[width=.9\textwidth]{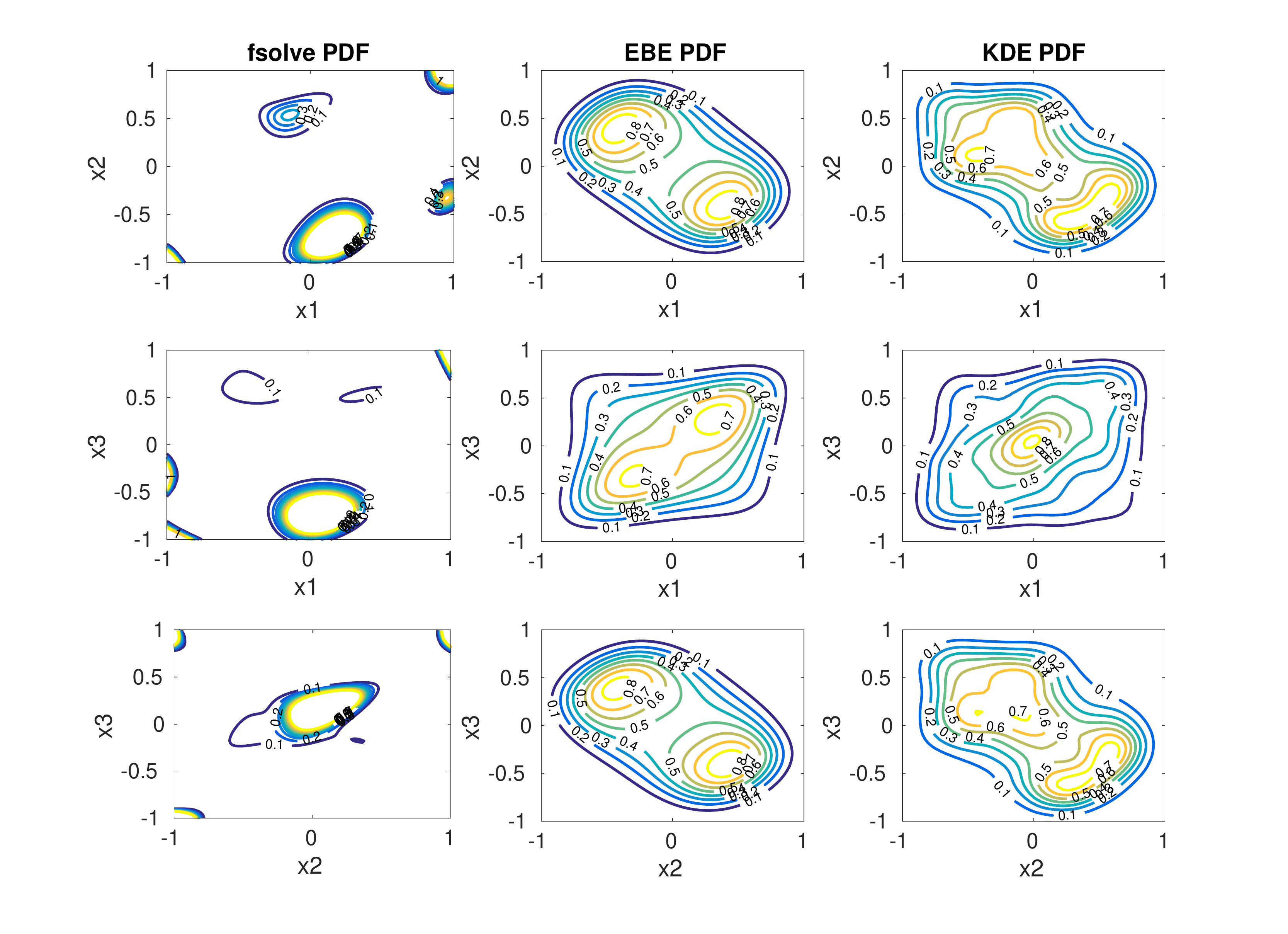}
        \caption{The comparison of the two-dimensional marginal density functions obtained by the MATLAB {\asciifamily fsolve.m} function (first column), the EBE algorithm (second column) that solves a three-dimensional problem accounting up to order-four moment constraints, and the two-dimensional kernel density estimate (third column).}\label{Fig:KS3d}
\end{figure}

\begin{figure}\centering
\includegraphics[width=0.8\textwidth]{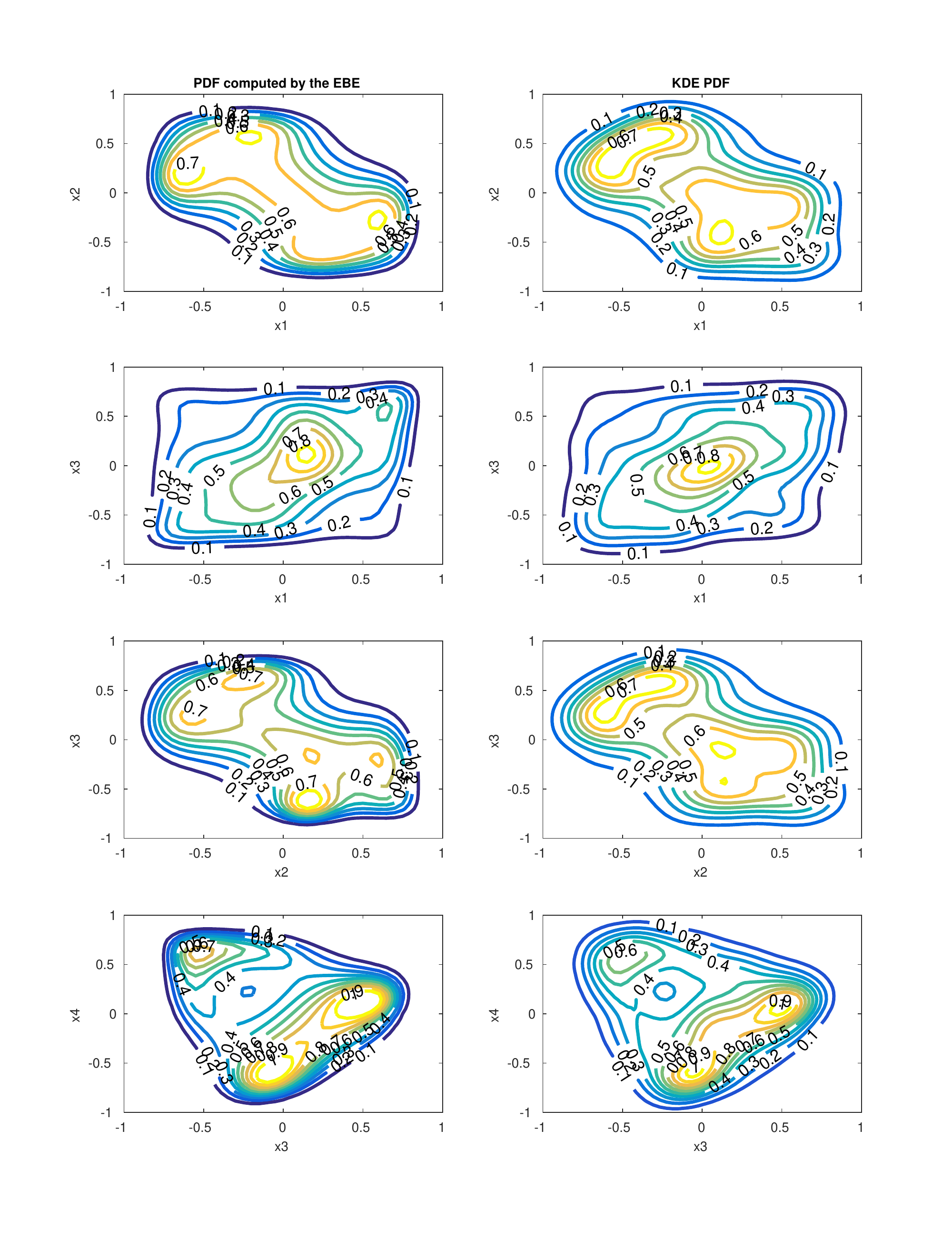}
        \caption{The comparison of the two-dimensional marginal density functions obtained by the EBE algorithm (first column) that solves a four-dimensional problem accounting {\color{black}more than} order-four moment constraints (see text for detail) and the two-dimensional kernel density estimate (second column).}\label{Fig:KS4d}
\end{figure}

\begin{figure}\centering
\includegraphics[width=4in,height=4in]{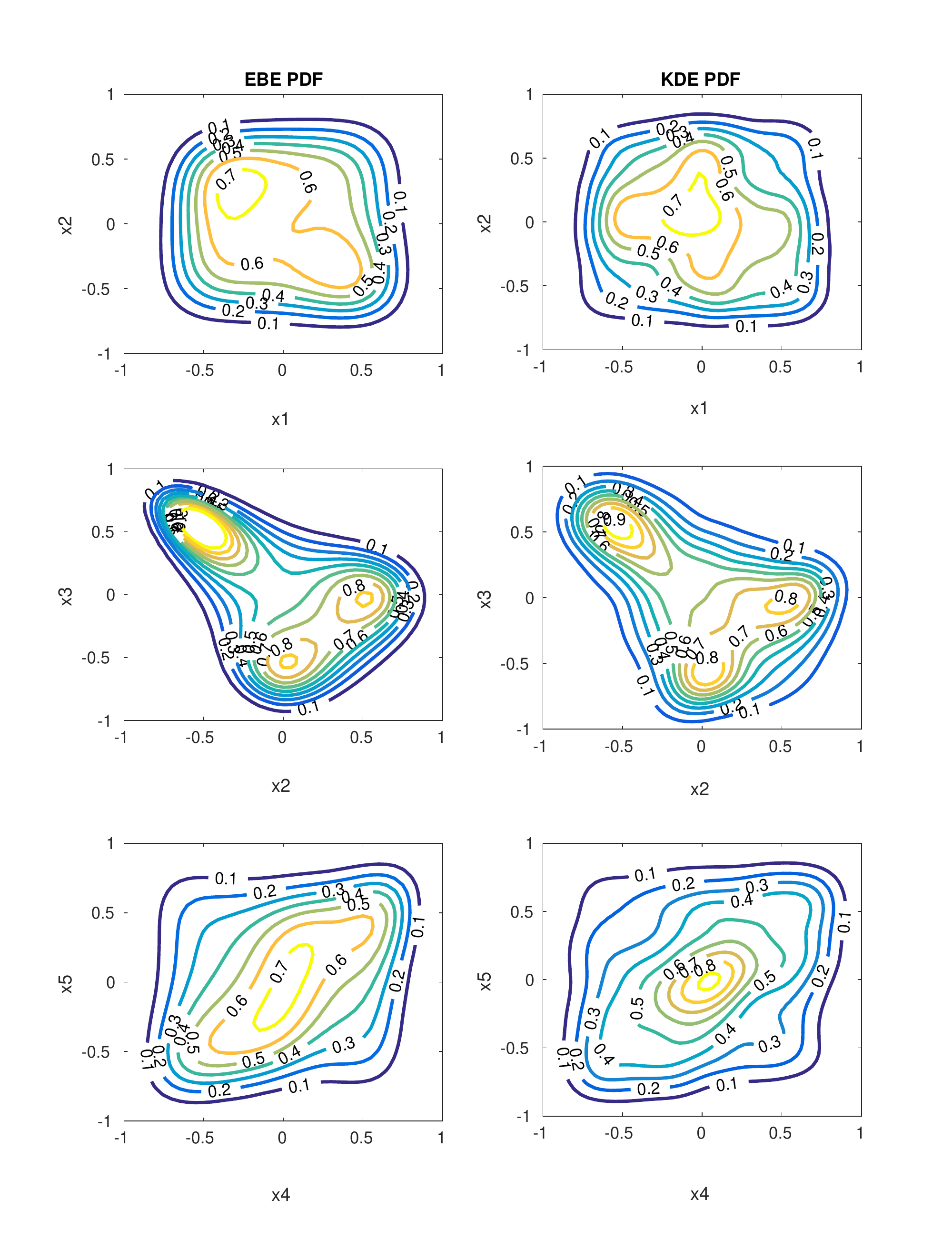}
        \caption{The comparison of the two-dimensional marginal density functions obtained by the EBE algorithm (first column) that solves a five-dimensional problem {\color{black}accounting the automatically selected, 91 out of the prescribed 125 moments,} and the two-dimensional kernel density estimate (second column).}\label{Fig:KS5d}
\end{figure}

\end{examp}

\section{Summary}\label{section7}

In this paper, we introduced a novel equation-by-equation algorithm for solving a system of nonlinear equations arising from the moment constrained maximum entropy problem. Theoretically, we have established the local convergence and provided a sufficient condition for global convergence. Through the convergence analysis, we understood that the method, geometrically, finds the solution by searching along the surface corresponding to one component of the nonlinear equations. Numerically, we have demonstrated its accuracy and efficiency on various examples. In one of the examples, we found that the EBE algorithm produces more accurate solutions compared to the previously developed BFGS-based algorithm which does not use the Hessian information \cite{abramov2009,abramov2010}. In this same example, we also found that the EBE is superior compared to two schemes that use the Hessian information, including the current MATLAB built-in solver which uses the trust-region-dogleg algorithm and the classical Newton's method.

We also found that the proposed EBE algorithm is able to solve a system of 70-310 equations when the maximum entropy solution exists compared to the previously developed BFGS method which was shown to work for a system of size 44-83 equations. On the Kuramoto-Shivashinski example, the EBE method is able to reconstruct the density of a four-dimensional problem accounting up to order-four moments (or 70 constraints). In this case, we showed that the estimate is improved by accounting for 17 additional constraints of order-six moments. For the five-dimensional problem with moments up to order-four, the EBE method reconstructs the solution within the desired precision, $10^{-10}$, by automatically selecting a subset of 91 constraints from the total prescribed 125 constraints induced by moments of up to order-four.

While the automatic constraint selection is a desirable feature since the maximum entropy solutions within the tolerance may not be easily estimated (nor theoretically available), further study is required to fully take advantage of this feature. In particular, an important open problem is to develop a mathematical theory for ordering the constraints since the path of the solution is sensitive to the order of the constraints. Simultaneously, the ordering of the constraints need to preserve the convexity of the polynomials in the exponential term of \eqref{expoly}. We should stress that the EBE method is computationally not the most efficient method since it is designed to avoid singularities by tracking along the surface corresponding to one component of the nonlinear equations. Therefore, a more efficient EBE method will be one of future directions.

%However, for high dimensional cases, the computational cost remains expensive since that the numerical integration requires a high level sparse grid to keep the accuracy. Therefore, we have to either decrease the accuracy by increasing the tolerance of the EBE algorithm or increase the number of level in the sparse grid integration for our high dimensional examples. In the future, we plan to employ the adaptive sparse grid or adaptive tolerance for $Tol_1$ and $Tol_2$ in our algorithm to improve the efficiency of the EBE algorithm.  Another direction that we plan to pursue is to implement the EBE algorithm with pivoting so that we could minimize the number of inner loop iterations in the computation of the path tracking when the size of the system is large.

\appendix
\section{The detailed calculation of the Jacobian of the map $H_{i}$}\label{Jac}

In this Appendix, we will give the detailed computation for the Jacobian of the map $H_{i}$ in \eqref{mapH} evaluated at $\bm{\mu}^{(i)}$, the solution of ${\bf F}_{i}(\bm{\lambda}_i,\alpha_{i+1},\ldots,\alpha_n)={\bf 0}$. Recall that for $\bm{H}_i=(\bm{H}_{i,1},H_{i,2})$ in \eqref{mapH},
\bes
\bm{H}_{i,1} (\bm{\lambda}_i) &=& \bm{g}_{i}  - \mathbf{F}_{i-1,\bm{\lambda}_{i-1}}(\bm{g}_{i},H_{i,2})^{-1} \mathbf{F}_{i-1}(\bm{g}_{i},H_{i,2}) \nonumber\\
H_{i,2}  (\bm{\lambda}_i) &=& \lambda_{i}-\Big(\frac{\partial F_{i}}{\partial \lambda_{i}}(\bm{\lambda}_{i})\Big)^{-1} F_{i}(\bm{\lambda}_{i}), \nonumber
\ees
where $\bm{g}_{i}:\mathbb{R}^{i-1}\to\mathbb{R}^{i-1}$ is defined as in \eqref{functiong}.

To take another derivative of $\bm{H}_{i,1}$ with respect to $\lambda_j$. We use the fact that if ${\bf F}_{i-1,\bm{\lambda}_{i-1}}$ is a nonsingular matrix, then
\bes
\frac{\partial}{\partial \lambda_j} \Big({\bf F}_{i-1,\bm{\lambda}_{i-1}}\Big)^{-1} = ({\bf F}_{i-1,\bm{\lambda}_{i-1}})^{-1} \frac{\partial {\bf F}_{i-1,\bm{\lambda}_{i-1}}}{\partial \lambda_j} ({\bf F}_{i-1,\bm{\lambda}_{i-1}})^{-1},\nonumber
\ees
and the Hessian $\frac{\partial {\bf F}^*_{i-1,\bm{\lambda}_{i-1}}}{\partial \lambda_j}$ is well-defined, which are the Assumptions~1.2 and 1.3.  We can deduce that for $j=1,\ldots,i$,
\bes
\frac{\partial \bm{H}_{i,1}}{\partial \lambda_j} &=& \frac{\partial \bm{g}_{i}}{\partial \lambda_j} - ({\bf F}_{i-1,\bm{\lambda}_{i-1}})^{-1} ({\bf F}_{i-1,\bm{\lambda}_{i-1}})^{-1} \frac{\partial {\bf F}_{i-1,\bm{\lambda}_{i-1}}}{\partial \lambda_j} ({\bf F}_{i-1,\bm{\lambda}_{i-1}})^{-1} \mathbf{F}_{i-1} \nonumber \\ && - \Big({\bf F}_{i-1,\bm{\lambda}_{i-1}})^{-1} ({\bf F}_{i-1,\bm{\lambda}_{i-1}} \frac{\partial \bm{g}_{i}}{\partial \lambda_j} + \frac{\partial {\bf F}_{i-1}}{\partial \lambda_{i}}\frac{\partial H_{i,2}}{\partial \lambda_j}\Big),\label{A4}\\
\frac{\partial H_{i,2}}{\partial \lambda_j} &=& \frac{\partial \lambda_{i}}{\partial \lambda_j} - \frac{\partial}{\partial \lambda_j}\Big(\frac{\partial F_{i}}{\partial \lambda_{i}}\Big)^{-1}F_{i} - \Big(\frac{\partial F_{i}}{\partial \lambda_{i}}\Big)^{-1} \frac{\partial F_{i}}{\partial \lambda_j}.\label{A5}
\ees
Evaluating these two equations at $\bm{\mu}^{(i)}$ and using the fact that ${\bf F}_{i}^*:={\bf F}_{i}(\bm{\mu}^{(i)})= {\bf 0}$, the second terms in the right-hand-side of \eqref{A4}-\eqref{A5} vanish and we have,
\bes
\frac{\partial \bm{H}_{i,1}^*}{\partial \lambda_j} &=&\frac{\partial \bm{g}^*_{i}}{\partial \lambda_j} -
({\bf F}^*_{i-1,\bm{\lambda}_{i-1}})^{-1} ({\bf F}^*_{i-1,\bm{\lambda}_{i-1}} \frac{\partial \bm{g}^*_{i}}{\partial \lambda_j} + \frac{\partial {\bf F}^*_{i-1}}{\partial \lambda_{i}}\frac{\partial H^*_{i,2}}{\partial \lambda_j}) \nonumber \\ &=&-
({\bf F}^*_{i-1,\bm{\lambda}_{i-1}})^{-1} \Big(\frac{\partial {\bf F}^*_{i-1}}{\partial \lambda_{i}}\frac{\partial H^*_{i,2}}{\partial \lambda_j}\Big),\nonumber\\
\frac{\partial H^*_{i,2}}{\partial \lambda_j} &=& \delta_{j,i} - \Big(\frac{\partial F^*_{i}}{\partial \lambda_{i}}\Big)^{-1} \frac{\partial F^*_{i}}{\partial \lambda_j}.\nonumber
\ees
where $\delta_{j,i}$ is one only if $j=i$ and zero otherwise.

\section*{Acknowledgments}
We thank Rafail Abramov for supplying the two-dimensional density data set for \ref{Ex4}. The BFGS code that we used for comparison in \ref{Ex4} was downloaded from \cite{maxentcode}.

\providecommand{\bysame}{\leavevmode\hbox to3em{\hrulefill}\thinspace}
\providecommand{\MR}{\relax\ifhmode\unskip\space\fi MR }
% \MRhref is called by the amsart/book/proc definition of \MR.
\providecommand{\MRhref}[2]{%
  \href{http://www.ams.org/mathscinet-getitem?mr=#1}{#2}
}
\providecommand{\href}[2]{#2}

%\bibliographystyle{amsplain}
%\bibliography{ref}

\end{document}